\documentclass[12pt]{amsart}
\usepackage[utf8]{inputenc}
\usepackage[margin=1in]{geometry}

\usepackage{changes}
\usepackage{comment}
\usepackage{accents}

\usepackage{fullpage}
\usepackage{amssymb,amsfonts}
\usepackage[all,arc]{xy}
\usepackage{enumerate}
\usepackage{enumitem}
\usepackage{mathrsfs}
\usepackage{mathtools}
\usepackage{tikz}
\usepackage{tikz-cd}
\usetikzlibrary{patterns}
\usepackage{graphicx}
\usepackage[font={small}]{caption}
\usepackage{float}
\usepackage{subcaption}
\usepackage{hyperref}
\usepackage{rotating}
\usepackage{changes}
\usepackage{standalone}
\usepackage{changes}
\usepackage{relsize}
\usepackage{thm-restate}
\usepackage{extpfeil}

\newtheorem{thm}{Theorem}[section]
\newtheorem*{thm*}{Theorem}

\newtheorem{prop}[thm]{Proposition}
\newtheorem{lem}[thm]{Lemma}

\newtheorem{quest}[thm]{Question}

\theoremstyle{definition}
\newtheorem{defn}[thm]{Definition}

\newtheorem{exmp}[thm]{Example}

\theoremstyle{remark}

\newcommand{\Z}{\mathbb{Z}}
\newcommand{\Q}{\mathbb{Q}}
\newcommand{\R}{\mathbb{R}}
\newcommand{\C}{\mathbb{C}}
\newcommand{\F}{\mathbb{F}}
\newcommand{\N}{\mathbb{N}}

\title{Relative knot concordance}

\begin{document}

\begin{abstract}
Motivated by the relative-slice problem in $4$-manifold topology, we define for a knot $R$ the notion of a relative knot concordance group $\mathcal{C}_R$ and analyze the structure of its algebraic analogue $\mathcal{AC}_R$ modulo torsion. Using Milnor signatures, we show that $\mathcal{AC}_R$ has infinite rank for every $R$. We also show that any prescribed finite set of natural numbers can be realized as orders of elements in a single relative concordance group.
\end{abstract} 

\author{Vyacheslav Krushkal}
\address{Department of Mathematics, University of Virginia, Charlottesville, VA 22904}
\email{krushkal\char 64 virginia.edu}

\author{Ryan Stees}
\address{Department of Mathematics, The University of Texas at Austin, Austin, TX 78712}
\email{ryan.stees\char 64 austin.utexas.edu}
\maketitle


\section{Introduction}

Motivated by the $4$-dimensional topological surgery conjecture \cite{Freedman} for free nonabelian groups, Freedman and Lin \cite{FL} introduced  
the {\em relative-slice problem}, which concerns embeddability of $4$-dimensional $2$-handlebodies into the $4$-ball. Given a Kirby diagram $(L,R)$ of a $4$-manifold, where the disjoint links $R,L$ in $S^3$ describe the $1$- and $2$-handles respectively, the problem asks:  \begin{equation} \label{eq:RL} \text{Is $L$ slice in the $4$-manifold }\; D^4\cup_{R, 0\text{-framing}}\, 2\text{-handles}?\end{equation}
The question \eqref{eq:RL} can be formulated in both smooth and topological categories, and we refer the reader to \cite{Kr, FK} for a detailed discussion and applications of this approach to surgery. 

In this paper we isolate a particular facet of the theory that concerns knots rather than more general links. In this setting we establish a relative version of the classical knot concordance theory, and specifically a relative version ${\mathcal C}_R$ of the knot concordance group $\mathcal{C}$. 

Given a fixed knot type $R$, two knots $K_0, K_1$  with $K_i\subset S^3\times \{ i\}, i=0,1$ are called {\em concordant relative to $R$} if they cobound a (topologically flat or smooth, depending on the category) annulus in $S^3\times [0,1]\cup 0$-framed $2$-handles, attached to a finite union of split copies of $R$ in $S^3\times \{ 0\}$ and $S^3\times \{ 1\}$; see Section~\ref{sec:def} for a more detailed discussion and a proof that this defines an equivalence relation. 
If $K_0, K_1$ are concordant in the classical sense, they are concordant relative to $R$, so we obtain a homomorphism \[\text{rel}_R:\mathcal{C}\twoheadrightarrow\mathcal{C}_R\] with kernel $\mathcal{R}$, the subgroup of concordance classes of knots which are \emph{slice relative to $R$}.

We develop analogues of tools from classical concordance theory, and specifically from algebraic concordance, to analyze relative concordance groups $\mathcal{C}_R$. 
Milnor \cite{Milnor} and J. Levine \cite{Levine} initiated the study of the so-called \emph{abelian} concordance invariants which extract concordance data from the Alexander module of a knot, the first homology of the universal abelian cover of the knot exterior.
There is a surjective homomorphism \[A:\mathcal{C}\twoheadrightarrow\mathcal{AC}\] onto what is now called the \emph{algebraic concordance group}, the group $\mathcal{AC}$ of Witt equivalence classes of Seifert forms of knots in $S^3$, which Levine showed is isomorphic to \[\Z^\infty\oplus(\Z/2)^\infty\oplus(\Z/4)^\infty.\]

In Section~\ref{sec:Seifert}, we define for a fixed knot $R\subset S^3$ a group $\mathcal{AC}_R$, the \emph{algebraic concordance group relative to $R$}, an analogue of $\mathcal{AC}$ in the context of concordance relative to $R$.
This definition, along with the observation that algebraically slice knots are \emph{algebraically relatively slice}, implies there is a surjective homomorphism \[\mathcal{A}\text{rel}_R:\mathcal{AC}\twoheadrightarrow\mathcal{AC}_R.\]
We then prove that the surjection $A:\mathcal{C}\twoheadrightarrow\mathcal{AC}$ induces a surjection \[A_R:\mathcal{C}_R\twoheadrightarrow\mathcal{AC}_R\] as in the diagram \eqref{diagram:exact}; see Proposition~\ref{prop:halfdies}.

By the existence of $A_R$, the kernel of $\mathcal{A}\text{rel}_R$ contains $A(\mathcal{R})$, the algebraic concordance classes of relatively slice knots.
In Section~\ref{sec:cables}, we show that this kernel equals $A(\mathcal{R})$.
In other words, we show that $\mathcal{AC}_R$, initially defined as a group of equivalence classes of Seifert forms, is the quotient of $\mathcal{AC}$ by the algebraic concordance classes of relatively slice knots.
We accomplish this through a useful characterization of $\mathcal{AC}_R$ as the quotient of $\mathcal{AC}$ by the algebraic concordance classes of the $(n,1)$-cables of the fixed knot $R$ and its orientation-reversed mirror image $-R$, which we show are all relatively slice.
See Proposition~\ref{prop:quotient} for details.
\begin{equation*}
\begin{tikzcd}
0 \arrow[r] & \mathcal{R} \arrow[r,hookrightarrow] \arrow[d,twoheadrightarrow,"A|_\mathcal{R}"'] & \mathcal{C} \arrow[r,twoheadrightarrow,"{\text{rel}}_R"] \arrow[d,twoheadrightarrow,"A"']& \mathcal{C}_R \arrow[d,twoheadrightarrow,"A_R"] \arrow[r] & 0 \\
0 \arrow[r] & A(\mathcal{R}) \arrow[r,hookrightarrow] & \mathcal{AC} \arrow[r,twoheadrightarrow,"\mathcal{A}{\text{rel}}_R"'] & \mathcal{AC}_R \arrow[r] & 0
\end{tikzcd} \tag{$\ast$} \label{diagram:exact}
\end{equation*}

In Section~\ref{sec:signatures}, we prove the following main theorem.

\begin{restatable*}{thm}{summand} \label{thm:main}
For any knot $R\subset S^3$, the algebraic relative concordance group $\mathcal{AC}_R$ admits a surjective homomorphism $\mathcal{AC}_R\twoheadrightarrow\Z^\infty$.
\end{restatable*}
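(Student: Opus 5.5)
By Proposition~\ref{prop:quotient}, $\mathcal{AC}_R$ is the quotient of $\mathcal{AC}$ by the subgroup generated by the algebraic concordance classes of the cables $R_{n,1}$ and $(-R)_{n,1}$, $n\ge 1$. So it suffices to build a homomorphism $\Phi:\mathcal{AC}\to\Z^\infty$ that vanishes on all these cables and whose restriction to some family of knots is surjective. For $\Phi$ I will use Levine--Tristram (Milnor) signature jump functions. For $\omega$ on the unit circle, let $\delta_\omega(V)$ be the jump of $\sigma_\xi(V)=\operatorname{sign}\bigl((1-\xi)V+(1-\bar\xi)V^T\bigr)$ as $\xi$ crosses $\omega$. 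Evaluated at roots of the Alexander polynomial on the circle, these jumps are Witt invariants, so $\delta_\omega$ is a homomorphism $\mathcal{AC}\to\Z$. They vanish on metabolic forms because the jumps only see the $\omega$-primary parts of the isometric structure.

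The key computation is the signature of cables. The $(n,1)$-cable of $R$ has a Seifert form Witt-equivalent to $V_R$ evaluated at $\xi^n$, plus the torus knot $T_{n,1}$, which is trivial. By Litherland's formula, $\sigma_\xi(R_{n,1})=\sigma_{\xi^n}(R)$. Similarly, the Alexander polynomial satisfies $\Delta_{R_{n,1}}(t)=\Delta_R(t^n)$. So the roots on the circle of $\Delta$ for any cable are $n$-th roots of roots $\zeta$ of $\Delta_R$ on the circle. Let $Z\subset S^1$ be the union over $n$ of all $n$-th roots of unit-circle roots of $\Delta_R$, together with the roots of unity. This set is countable, and $\delta_\omega$ vanishes on every cable of $R$ and of $-R$ for every $\omega\notin Z$.

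Next, choose infinitely many $\omega_1,\omega_2,\dots$ in $S^1\setminus Z$ that are algebraic, with pairwise non-Galois-conjugate minimal polynomials $p_k(t)$ that are symmetric and irreducible. For each $k$, realize a knot $K_k$ whose Alexander polynomial is $p_k$ (or $p_k$ times a unit) and whose signature jumps by $\pm2$ at $\omega_k$ and its conjugates only. Such knots exist by Levine's realization: take a Seifert form with $\Delta=p_k$ and $p_k(1)=\pm1$. This is the standard construction giving the $\Z^\infty$ summand of $\mathcal{AC}$, which is exactly what Levine uses. Define
\[\Phi(V)=\bigl(\tfrac12\delta_{\omega_k}(V)\bigr)_k.\]
Each coordinate is a homomorphism and kills the cable subgroup, so $\Phi$ descends to $\mathcal{AC}_R$. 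A given class has only finitely many nonzero jumps, so $\Phi$ lands in $\bigoplus\Z=\Z^\infty$. Since $\Phi(K_k)=\pm e_k$ (the conjugates of $\omega_k$ are not among the other $\omega_j$), $\Phi$ is surjective.

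The main obstacle is the cable step: verifying that the Seifert form of $R_{n,1}$ is algebraically concordant to the $\xi\mapsto\xi^n$ pullback of $V_R$. More precisely, I must verify that its signature jumps occur only at $n$-th roots of jumps of $R$, including the behaviour at roots of unity and the sign conventions for $-R$. I would derive this directly from an explicit Seifert matrix of the satellite, namely $V_{R_{n,1}}\oplus$ (pattern form) with the companion form tensored with the $n\times n$ cyclic structure. The other point needing care is choosing the $\omega_k$ so that the realizing polynomials avoid $Z$. This is possible since $Z$ is countable while there are infinitely many suitable irreducible symmetric polynomials, and only finitely many lie in any given finite union of Galois orbits.
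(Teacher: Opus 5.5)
Your architecture is the paper's. You pass to $\mathcal{AC}/\langle[R_{n,1}],[(-R)_{n,1}]\rangle$, use $\Delta_{R_{n,1}}(t)=\Delta_R(t^n)$ to confine the cable signatures to $n$-th roots of unit-circle roots of $\Delta_R$, and keep infinitely many independent signatures supported off that set. The cable step you call the main obstacle is already settled by the Alexander polynomial formula you quote: the Levine--Tristram function is locally constant away from roots of $\Delta$, so no explicit satellite Seifert matrix is needed.

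\textbf{The gap.} The unproved step is the one you treat as a side remark. You must produce infinitely many $\omega_k\notin Z$, with distinct minimal polynomials, that are simple roots of Alexander polynomials of knots. This forces the primitive minimal polynomial to satisfy $p_k(1)=\pm1$. Your justification for this step is invalid.
\begin{itemize}
\item ``$Z$ is countable while there are infinitely many suitable polynomials'' proves nothing. The set of all candidate $\omega$ is itself countable, and one countable set can contain another.
\item $Z$ is not a finite union of Galois orbits. It contains every root of unity and, for each root $\zeta$ of $\Delta_R$, every solution of $\omega^n=\zeta$ for every $n$. That is infinitely many orbits.
\end{itemize}
So nothing you wrote rules out that all but finitely many realizable $\omega$ lie in $Z$. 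Showing this cannot happen is the real content of the theorem.

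\textbf{How the paper handles it.} The paper restricts to Milnor's points $\cos\theta_p=\frac{2p-1}{2p}$ with $p$ prime. These are roots of $pt^2-(2p-1)t+p$, realized by Milnor's knots with $\sigma_{\theta_i}(K_j)=\delta_{ij}$. If $e^{i\theta_p}$ is an $n$-th root of $e^{i\beta}$, then $P_n\bigl(\tfrac{2p-1}{p}\bigr)=2\cos\beta$, where $P_n(Y)=2T_n(Y/2)$ is monic in $\Z[Y]$. Hence the left side has reduced denominator exactly $p^n$. So $2\cos\beta$ is a rational number whose denominator is a power of a single prime. Each of the finitely many roots $e^{i\beta}$ of $\Delta_R$ therefore excludes at most one $p$.

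\textbf{What would repair your proof.} You need some arithmetic input of this kind. The paper's denominator argument works. Alternatively, a height argument for quadratic $\omega$ on the circle works: $h(\omega^n)=n\,h(\omega)$, and Northcott's theorem then leaves only finitely many quadratic $\omega$ in $Z$. Either one supplies infinitely many admissible $\omega_k$, for instance roots of $kt^2-(2k-1)t+k$. With that in place, your $\Phi=(\tfrac12\delta_{\omega_k})_k$ works as you describe.
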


\noindent More precisely, we show that given any $R$, the surjection \[\mathcal{AC}\xtwoheadrightarrow{\bigoplus_{S'}\sigma_\theta}\Z^\infty\] defined by a particular collection of signature invariants $\{\sigma_\theta\}_{\theta\in S'}$ which is independent of $R$ still has infinite rank after passing to algebraic relative concordance.
We have the following commutative square, which relies on the aforementioned Proposition~\ref{prop:quotient}.
\begin{center}
\begin{tikzcd}
\mathcal{AC} \arrow[r,twoheadrightarrow,"\oplus_{S'}\sigma_\theta"] \arrow[d,twoheadrightarrow] & \Z^\infty \arrow[d,twoheadrightarrow] \\
\mathcal{AC}_R \arrow[r,twoheadrightarrow] & \Z^\infty/(\bigoplus_{S'}\sigma_\theta)(A(\mathcal{R})) \arrow[r,twoheadrightarrow] & \Z^\infty
\end{tikzcd} 
\end{center}

Toward this result, we first recall work of Milnor \cite{Milnor} and J. Levine \cite{Levine} on algebraic concordance in Sections~\ref{subsec:Milnorsgn} and~\ref{subsec:Levinesgn}.
Our discussion focuses on signature invariants, which determine the image of any concordance class in the torsion-free part of $\mathcal{AC}$.
We then conclude Section~\ref{sec:signatures} by proving Theorem~\ref{thm:main}, which relies on Milnor's work.

In Section~\ref{sec:torsion}, we briefly discuss torsion in relative concordance groups.
Contrary to the setting of the classical knot concordance group, in which the existence of torsion of order greater than 2 remains a major open question (cf. the K3 problem \cite[Problem 1.38(a)]{K3}), we show that one can readily produce torsion elements in relative concordance groups $\mathcal{C}_R$ for suitable choices of $R$.

The focus in this paper is on relative algebraic concordance, however we expect that many tools in both topological and smooth concordance theories can be adapted to study more subtle phenomena in relative knot concordance.

Another possible version of relative knot concordance involves a knot type $R$ in a solid torus $T$ rather than in $S^3$. In this setting, two knots $K_0,K_1$ are relatively slice if they cobound an annulus in $S^3\times [0,1]\cup 2$-handles, where the zero-framed $2$-handles are attached to $S^3\times \{ 0\}$ and $S^3\times \{ 1\}$ along finitely many copies $R_i\subset T_i$ where the solid tori $T_i$ are regular neighborhoods of meridional circles of $K_0, K_1$. This theory, which has a rather different flavor, is not pursued in the present paper.


\subsection*{Acknowledgements} 

VK was supported in part by NSF grant DMS-2405044.
RS thanks the Department of Mathematics at the University of Virginia---where most of the work for this project took place---for providing a supportive research environment.


\subsection*{AI Disclosure} The authors discussed number-theoretic Proposition~\ref{prop: finite} with ChatGPT 5.6 Sol, which suggested a proof of this statement. The authors checked this proof, and did not use AI in any other parts of the paper other than for proofreading the final manuscript. 


\section{Relative concordance} 
Fix a knot type $R\subset S^3$. Given $r\geq 0$, write $R_r$ to denote $r$ split copies of $R$, so the copies of $R$ in $R_r$ are contained in a collection of $r$ disjoint balls in $S^3$. Denote the union of these balls by $B_r$.
In the following definition, we assume that the knots $J,K$ lie in the complement of the balls containing split copies of $R$.

\begin{defn} \label{def:relconc}
Fix a knot $R\subset S^3$. We say knots $J,K\subset S^3$ are \emph{topologically (resp. smoothly) concordant relative to $R$}, or \emph{relatively concordant}, if for some $r,s \geq 0$ there exists a locally flat (resp. smooth) embedding \[S^1\times[0,1]\hookrightarrow(S^3\times[0,1])\cup(\sqcup_{i=1}^{{r+s}} h_i)\] which restricts on $S^1\times\{0\}$ and $S^1\times\{1\}$ to the knots $J$ and $K$, respectively, where the $h_i$ are 0-framed 2-handles attached to $S^3\times[0,1]$ along 
$R_r\subset S^3\times\{1\}$ and $R_s\subset S^3\times\{0\}$.
\end{defn}

For the following definition, denote by $-R_r$ the union of $r$ split copies of the orientation-reversed mirror image $-R$.

\label{sec:def}
\begin{defn} \label{def:relslice}
We say a knot $K\subset S^3$ is \emph{topologically (resp. smoothly) slice relative to $R$}, or \emph{relatively slice}, if there exist $r,s\geq 0$ such that $K\subset S^3\smallsetminus B_{{r+s}}$
bounds a locally flat (resp. smoothly) embedded disk in $B^4\cup(\sqcup_{i=1}^{{r+s}} h_i)$, where the $h_i$ are 0-framed 2-handles attached to $B^4$ along a split collection $R_r\sqcup -R_s$.
\end{defn}

Suppose $K$ is slice relative to $R$, and let $r(R)$, $m(R)$, and $-R$ denote the orientation-reverse, mirror image, and orientation-reversed mirror image of $R$, respectively.
As a convention, we take $m(R)$ to be the image of $R$ under a reflection $m$ across an equatorial $S^2\subset S^3$ which does not intersect $R$.
We may assume that the slice is transverse to the cocores of the $2$-handles, and further that the intersection of the slice with the $2$-handles consists of a collection of parallel copies of their cores. 
Removing the $(r+s)$ 2-handles attached to $B^4$, we see that $K$ and a link consisting of copies of $R$, $r(R)$, $-R$, and $m(R)$ cobound a connected planar surface in $B^4$.
This link consists of $(r+s)$ sub-links $R_i(n_i^+,n_i^-)$, $1\leq i\leq r$, and $-R_i(n_i^+,n_i^-)$, $r+1\leq i\leq r+s$, each corresponding to a 2-handle, which are split from each other. 
For $1\leq i\leq r$, the sublink corresponding to the $i^{\text{th}}$ 2-handle $h_i$ consists of $n_i^+$ copies of $R$ and $n_i^-$ copies of $r(R)$, all parallel to each other. 
For $r+1\leq i\leq r+s$, the sublink corresponding to the $i^{\text{th}}$ 2-handle consists of $n_i^+$ copies of $-R$ and $n_i^-$ copies of $m(R)=r(-R)$, all parallel.
We will frequently conflate a relative slice (or a relative concordance) with the corresponding planar surface.

Links consisting of $K$ and collections of parallel copies of $R$, $r(R)$, $-R$, and $m(R)$ will play a significant role in our analysis of relative concordance. Given $r,s\geq 0$ and a $2(r+s)$-tuple $n=((n_1^+,n_1^-),\dots,(n_{r+s}^+,n_{r+s}^-))$, we define the link $R(r,n)$ by \[R(r,n):=\left(\sqcup_{i=1}^rR_i(n_i^+,n_i^-)\right)\sqcup\left(\sqcup_{i=r+1}^{r+s}-R_i(n_i^+,n_i^-)\right),\] and we define \[K(R,r,n):=K\sqcup R(r,n).\]

\begin{lem} \label{prop:unknot}
A knot $K$ is slice relative to $R$ if and only if $K$ is relatively concordant to the unknot.
\end{lem}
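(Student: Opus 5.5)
Both directions come from converting between the two ambient manifolds of Definitions~\ref{def:relslice} and~\ref{def:relconc}. The two facts that do the work are that $B^4$ is obtained from $S^3\times[0,1]$ by capping off one end with a $4$-ball, and that the unknot bounds a standard disk in that capping ball. The one non-formal ingredient is the reflection $S^3\times[0,1]\to S^3\times[0,1]$, $(x,t)\mapsto(m(x),1-t)$. It swaps the two boundary ends and carries a split copy of $R$ attached at one end to a split copy of $-R$ (up to the orientation conventions fixed above) at the other.

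\emph{Relatively slice $\Rightarrow$ relatively concordant to the unknot.} Let $\Delta\subset B^4\cup(\sqcup h_i)$ be a relative slice disk for $K$, with handles attached along $R_r\sqcup -R_s\subset S^3=\partial B^4$.
\begin{enumerate}
\item Choose a small ball $B\subset\operatorname{int}B^4$, away from the handles, meeting $\Delta$ in a standard trivial disk. Removing $B$ leaves $B^4\smallsetminus\operatorname{int}B\cong S^3\times[0,1]$, with $\partial B$ as $S^3\times\{0\}$ and $\partial B^4$ as $S^3\times\{1\}$.
\item Then $\Delta\smallsetminus B$ is an annulus from an unknot $U\subset S^3\times\{0\}$ to $K\subset S^3\times\{1\}$. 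It lives in $S^3\times[0,1]$ with $2$-handles attached along $R_r\sqcup -R_s$, all in $S^3\times\{1\}$.
\item Definition~\ref{def:relconc} asks for copies of $R$ only, at both ends. To fix this, take a collar $S^3\times[1-\epsilon,1]$ meeting the annulus as a product $K\times[1-\epsilon,1]$. Isotope the split balls containing the $-R$ copies so that their handles are attached near a level that we then move to the $\{0\}$ end.
\item The cleanest bookkeeping for step 3 uses the reflection: a handle on $-R$ viewed from the opposite boundary component is a handle on $R$, because reversing the $[0,1]$ direction combined with $m$ exchanges $R$ and $-R$. The conditions on $r,s$ and the parametrisation $J$ at $0$, $K$ at $1$ then match the definition.
\end{enumerate}

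\emph{Relatively concordant to the unknot $\Rightarrow$ relatively slice.} Let $A$ be an annulus from $U\subset S^3\times\{0\}$ to $K\subset S^3\times\{1\}$ in $S^3\times[0,1]\cup(\sqcup h_i)$. Glue a $4$-ball $B$ to $S^3\times\{0\}$ and cap $U$ with a standard disk. The result is a disk bounded by $K$ in $B^4\cup(\sqcup h_i)$. The $R_r$ handles sit on $\partial B^4=S^3\times\{1\}$, as Definition~\ref{def:relslice} requires. The $R_s$ handles sit on the inner level $S^3\times\{0\}$ and must be moved to the outer boundary.

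I expect this transfer of handles from one boundary component to the other to be the main obstacle. I would handle it by dragging the split balls containing these copies of $R$ along arcs $\{p\}\times[0,1]$ that avoid the annulus. Since $A$ is $2$-dimensional in a $4$-manifold, such arcs and their tubular neighborhoods $B^3\times[0,1]$ can be chosen disjoint from $A$, after a general-position perturbation of the handle attachments. The handles can then be slid through these tubes onto $S^3\times\{1\}$. When a handle is viewed from the other side of the tube, the attaching knot appears as the orientation-reversed mirror $-R$, which is exactly why $-R_s$ occurs in Definition~\ref{def:relslice}. Framings stay $0$, since the framing of a split knot is intrinsic and unchanged by reflection. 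Splitness is preserved because the tubes are disjoint $3$-balls times intervals.

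Running the same tube argument in the reverse direction supplies step 3 of the first implication. This completes both directions.
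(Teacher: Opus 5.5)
Your proposal follows the paper's proof. You puncture the relative slice disk at an interior point (resp.\ cap the unknot end with a disk) to pass between disks in $B^4\cup(\sqcup h_i)$ and annuli in $S^3\times[0,1]\cup(\sqcup h_i)$, and you move the split balls from one end to the other along arcs in the complement of the surface, which turns copies of $R$ into copies of $-R$ and vice versa.

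To make the relocation step literally correct, do it as the paper does: work with the planar surface obtained by deleting the handles, move its boundary components in the split balls, and then attach fresh $0$-framed handles at the new end. This is needed because the surface necessarily meets the split balls where it enters the handles, so the tubes cannot be disjoint from it, and a handle cannot be slid from one boundary component to the other inside a fixed $4$-manifold. The global reflection $(x,t)\mapsto(m(x),1-t)$ serves only as orientation bookkeeping, since it would also carry $K$ to the other end.
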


\begin{proof} Consider a relative concordance $C$ from $K\subset S^3\times\{ 1 \}$ to the unknot in $S^3\times\{ 0 \}$. There may be boundary components of $C$ that are parallel copies of the knots $R_s\subset S^3\times\{ 0 \}$.
These split knots are contained in disjoint balls $B_i\subset S^3\times \{0\}$. Choose arcs, disjoint from each other and from $C$, connecting these balls to disjoint points in $S^3\times \{ 1\}$. Viewing each ball $B_i$ as a neighborhood of a point, the corresponding arc guides an isotopy of $B_i$ to an embedding into $S^3\times \{1\}$. After capping off the unknot with a disk, this gives rise a planar surface $C'$ bounded by $K$ with possible additional boundary components parallel to a split collection of copies of $R$ and $-R$ in $S^3\times \{ 1\}$.

Conversely, starting from a knot $K$ that is slice relative to $R$, use an arc from the slice disk to $S^3\times \{ 0\}$ to produce an unknot, and analogously to the argument in the preceding paragraph, reposition the planar surface so its boundary components parallel to $-R$ in $S^3\times \{ 1\}$ are replaced with those parallel  to $R$ in $S^3\times \{ 0\}$.
\end{proof}

\begin{prop} \label{prop:equiv}
Concordance relative to $R$ is an equivalence relation.
\end{prop}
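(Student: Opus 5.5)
We verify reflexivity, symmetry and transitivity directly from Definition~\ref{def:relconc}. Reflexivity is immediate: take $r=s=0$ and the product annulus $K\times[0,1]\subset S^3\times[0,1]$, which is a classical concordance and hence a relative one with no $2$-handles.

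For symmetry, suppose $C\subset (S^3\times[0,1])\cup(\sqcup_{i=1}^{r+s}h_i)$ is a relative concordance from $J\subset S^3\times\{0\}$ to $K\subset S^3\times\{1\}$, with $r$ handles attached along $R_r\subset S^3\times\{1\}$ and $s$ handles along $R_s\subset S^3\times\{0\}$. Apply the orientation-preserving diffeomorphism $(x,t)\mapsto (x,1-t)$ of $S^3\times[0,1]$. It carries $C$ to an annulus from $K$ at level $0$ to $J$ at level $1$. The handles previously attached along $R_r$ at level $1$ are now attached at level $0$, and those attached along $R_s$ at level $0$ are now at level $1$. Since the attaching spheres are literally the same curves in $S^3$ and the framing is $0$, the result is again a manifold of the required form, with the roles of $r$ and $s$ exchanged. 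One should check that the orientation conventions on the boundary components (knot type $R$ rather than $-R$) are consistent. Definition~\ref{def:relconc} uses copies of $R$ at both ends, so only knot type and framing matter, and the $0$-framing is preserved. Finally, reversing the $[0,1]$ coordinate does not change the knot type of the split copies of $R$ viewed in $S^3$, so this is a bookkeeping point rather than a real obstruction.

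For transitivity, let $C_1$ be a relative concordance from $J$ to $K$ in $W_1=(S^3\times[0,1])\cup(\text{handles})$, and $C_2$ one from $K$ to $L$ in $W_2$. We want to stack $W_1$ on top of $W_2$ along $S^3\times\{1\}\subset W_1$ and $S^3\times\{0\}\subset W_2$. The difficulty, which I expect to be the main obstacle, is that $W_1$ has handles attached along $R_{r_1}$ at its top level and $W_2$ has handles attached along $R_{s_2}$ at its bottom level. These sit exactly at the gluing level, so the stacked manifold does not literally have all handles attached at the two ends of $S^3\times[0,2]$. The remedy follows the proof of Lemma~\ref{prop:unknot}:
\begin{enumerate}
\item Before gluing, isotope the balls $B_{r_1}$ in the top of $W_1$ and $B_{s_2}$ in the bottom of $W_2$ so that they are disjoint from each other and from a neighborhood of $K$. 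This is possible since they are small balls in the complement of the annuli near the boundary.
\item Choose disjoint arcs in the product regions, missing $C_1$ and $C_2$, that run from the balls of $B_{r_1}$ up to the top level $S^3\times\{2\}$, and from the balls of $B_{s_2}$ down to $S^3\times\{0\}$. This uses a general position argument: arcs have dimension $1$ and the surfaces have dimension $2$ in a $4$-manifold.
\item Slide each ball, together with its attached $0$-framed handle, along its arc. This is an ambient isotopy of the attaching region, so the handles end up attached at the outer boundary levels. The isotopy does not alter the diffeomorphism type of the pair (manifold, annulus), because the arcs avoid the annuli.
\end{enumerate}
After this repositioning, the handles are attached along split copies of $R$ in $S^3\times\{0\}$ and $S^3\times\{2\}$, with $r=r_1+r_2$ and $s=s_1+s_2$. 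The union $C_1\cup_K C_2$ is an annulus from $J$ to $L$, and rescaling $[0,2]$ to $[0,1]$ gives a relative concordance. In the smooth category we additionally smooth corners along $K$ using collars, and in the topological category local flatness is preserved under gluing along a collared boundary.
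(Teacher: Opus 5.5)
Your argument is correct and is essentially the paper's: reflexivity and symmetry come from the product cylinder and the flip $t\mapsto 1-t$. For transitivity you separate the balls containing the split copies of $R$ at the gluing level and push them along arcs avoiding $C_1, C_2$ out to $S^3\times\{0\}$ and $S^3\times\{2\}$. The paper performs this move on the planar surfaces obtained by deleting the handles, which is a cleaner way to make your ``slide the ball together with its handle'' step precise. One small slip: $(x,t)\mapsto(x,1-t)$ is orientation-reversing, not orientation-preserving, but this is harmless since Definition~\ref{def:relconc} specifies the attaching curves only as knots in $S^3$.
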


\begin{proof} 
Proofs of both the symmetry (turning a relative concordance from $J$ to $K$ upside down) and the reflexivity (considering the product cylinder $K\times [0,1]\subset S^3\times [0,1]$) are immediate. 

Now suppose $C_1\subset S^3\times[0,1]$ is a relative concordance from $J$ to $K$, and $C_2\subset S^3\times[1,2]$ is a relative concordance from $K$ to $L$. Some of the boundary components of the planar surface $C_1$ are parallel copies of a split link $R_r\subset S^3\times \{ 1\}$, and also some boundary components of $C_2$ are parallel copies of $R_s\subset S^3\times \{ 1\}$. Applying an isotopy if necessary, we may assume that the balls containing these links $R_r$ and $R_s$ are disjoint. By definition, the components of $R_r$ are contained in a collection of disjoint $3$-balls $\sqcup B_i\subset S^3\times\{ 1\}$, disjoint from $K$. Consider a collection of disjoint arcs in $S^3\times [1,2]\smallsetminus C_2$ connecting $R_r$ to some points in $S^3\times \{ 2\}\smallsetminus \partial C_2$; we use these arcs to isotope $C_1$ so that all its boundary components, except for $J$ and $K$, are parallel copies of a disjoint union of $r$ copies of $R$ in $S^3\times \{ 2\}\smallsetminus \partial C_2$. An analogous isotopy of $C_2$ moves its boundary components in $R_s$ to $S^3\times \{ 0\}$. The union of these isotopic versions of $C_1$ and $C_2$ provides a relative concordance from $J$ to $L$, showing that the relation is transitive.
\end{proof}

The steps of the proof of the following proposition -- that the connected sum operation is well defined on equivalence classes, the identity elements and the inverse -- are directly analogous to the proof in the classical knot concordance context.

\begin{prop} \label{prop:group}
The set of concordance classes of knots relative to $R$ along with the operation of connected sum forms a group.
\end{prop}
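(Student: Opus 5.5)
We follow the classical argument, checking three things in turn: that connected sum is well defined on relative concordance classes, that the unknot $U$ is an identity, and that $-K$ represents an inverse of $K$. Associativity and commutativity are inherited from connected sum of knots, since isotopic knots are in particular relatively concordant (via the product cylinder, as in the reflexivity part of Proposition~\ref{prop:equiv}). The identity axiom is immediate because $K\# U$ is isotopic to $K$.

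\textbf{Well-definedness.} Suppose $C_1$ is a relative concordance from $J$ to $J'$ and $C_2$ is one from $K$ to $K'$. These are annuli in $S^3\times[0,1]$ with 2-handles attached along split copies of $R$ in the two boundary levels. First place $J,J'$ and $C_1$ in one half of $S^3\times[0,1]$, separated by a slab $S^2\times[0,1]$, and place $K,K'$ and $C_2$ in the other half. We also place the balls containing the attaching copies of $R$ for each concordance in the corresponding half, so that the slab meets neither the annuli nor the handle attaching regions. This is possible because the balls $B_r$ are disjoint from the knots and can be isotoped within their level.

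Next choose an arc $\gamma\times[0,1]$ running from $C_1$ to $C_2$ across the slab, disjoint from everything else. Perform a boundary connected sum of the two annuli along a band $\gamma\times[0,1]\times[-\epsilon,\epsilon]$. This gives a planar surface which is an annulus from $J\#K$ to $J'\#K'$ in $S^3\times[0,1]$ union the combined collection of 2-handles. The combined handles are still attached along split copies of $R$ in the two boundary levels, so this is a relative concordance.

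The main point to check here is that the split copies of $R$ coming from both concordances remain split from one another and from the knots after the band sum. This holds because each copy lies in a ball disjoint from the band, so splitness is preserved.

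\textbf{Inverses.} Classically, $K\#-K$ is slice via the ribbon disk obtained by spinning an arc. Hence $K\#-K$ is concordant to $U$ in the ordinary sense, and therefore relatively concordant to $U$ with $r=s=0$. By Lemma~\ref{prop:unknot}, this is the same as saying $K\#-K$ is relatively slice.

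Combining well-definedness, the identity, and inverses with the associativity and commutativity noted above gives the group structure. We expect the only genuinely new care, beyond the classical proof, to be in the well-definedness step: arranging disjointness of the handle attaching balls from the band. This is handled by the isotopy freedom already used in the proof of Proposition~\ref{prop:equiv}.
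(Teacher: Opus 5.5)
Your proposal is correct and takes the same approach as the paper, which proves this proposition only by remarking that well-definedness of connected sum, the identity, and inverses go through exactly as in the classical setting; you carry out those three steps and add the one relative point, namely keeping the split copies of $R$ and their handles away from the band. The only step you leave implicit, as the classical argument usually does, is first isotoping each concordance so that it is a product $\{p_i\}\times[0,1]$ near the ends of the strip $\gamma\times[0,1]$ and lies, with its attaching balls, in one half of $S^3\times[0,1]$; this is standard.
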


As the results in this paper concern concordance in the locally flat category, we now restrict to that setting.
Let $\mathcal{C}$ denote the topological concordance group, and let $\mathcal{C}_R$ denote the group of topological concordance classes of knots relative to $R$.
We call $\mathcal{C}_R$ the \emph{knot concordance group relative to $R$} or \emph{relative concordance group}.
Proposition~\ref{prop:group}, along with the observation that any slice knot is relatively slice, implies there is a surjective homomorphism \[\text{rel}_R:\mathcal{C}\twoheadrightarrow\mathcal{C}_R\] with kernel $\mathcal{R}\leq\mathcal{C}$, the subgroup consisting of concordance classes of knots which are slice relative to $R$.


\section{Seifert forms and algebraic relative concordance} \label{sec:Seifert}

Let $L\subset S^3$ be an oriented link.
A \emph{Seifert surface} for $L$ is a compact oriented surface $F$ with $\partial F=L$. 
We require that $F$ has no closed components. 
Since $F$ is oriented, it admits a regular neighborhood homeomorphic to $F\times[-1,1]$, where we identify $F$ with $F\times\{0\}$. 
In particular, we have the positive and negative push-offs of $F$, identified with $F\times\{1\}$ and $F\times\{-1\}$, respectively. 
The \emph{Seifert form} for $L$ with respect to $F$ is the bilinear pairing
\[B_F:H_1(F)\times H_1(F)\to \mathbb{Z} \] defined by $B_F(x,y)=\text{lk}(x,y^+)$, the linking number of $x$ with the positive push-off of $y$. 
If we choose a basis for $H_1(F)$, we may write this pairing as a \emph{Seifert matrix} $A_F$. 
In this section, we analyze the relationship between Seifert forms and relative concordance. 

Fix a Seifert surface $F_R$ for $R$.
Then $r(R)$, $-R$, and $m(R)$ have Seifert surfaces $-F_R$, $-m(F_R)$, and $m(F_R)$, respectively.
The boundary link $K(R,r,n)$ bounds a Seifert surface $F_K(R,r,n)$ with $(1+\sum_{i=1}^{r+s}(n_i^++n_i^-))$ components consisting of a Seifert surface $F_K$ for $K$ and a surface $F_R(r,n)$ consisting of copies of $F_R$, $-F_R$, $-m(F_R)$, and $m(F_R)$ corresponding to the copies of $R$, $r(R)$, $-R$, and $m(R)$, respectively, in $R(r,n)$.

Now recall the following classical knot concordance result (see \cite[Proposition 8.17]{Lickorish}).

\begin{prop} \label{prop:metabolizer}
If $K\subset S^3$ is slice, then for any Seifert surface $F$ for $K$ there is a half-dimensional summand of $H_1(F)$ on which the corresponding Seifert form vanishes.
In other words, any Seifert form for $K$ is metabolic.
\end{prop}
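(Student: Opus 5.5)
The plan is to follow the classical argument via the slice disk's complement and the ``half lives, half dies'' lemma. Let $K$ be slice, with locally flat slice disk $D\subset B^4$, and let $F$ be any Seifert surface for $K$. Push the interior of $F$ slightly into $B^4$ and form the closed surface $\Sigma = F\cup D$. In the topological category $D$ has a normal bundle (Freedman--Quinn), so $\Sigma$ is a locally flat closed orientable surface in $B^4$ with trivial normal bundle. The goal is to produce a $3$-manifold $M\subset B^4$ with $\partial M=\Sigma$, the relative version of a Seifert surface.

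To build $M$, consider the exterior $X=B^4\smallsetminus \nu(\Sigma)$. Alexander duality gives $H_1(X)\cong\Z$, generated by a meridian, so there is a map $X\to S^1$ that restricts on $\partial\nu(\Sigma)\cong \Sigma\times S^1$ to the projection. Make it transverse to a regular value in the topological category, again using Freedman--Quinn topological transversality. The preimage is a compact oriented $3$-manifold $M$ with $\partial M=\Sigma$ (after discarding closed components). Since $\Sigma = F\cup D$, we have $F\subset\partial M$.

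Next apply the standard half-lives-half-dies lemma: for a compact oriented $3$-manifold $M$, the kernel of $H_1(\partial M;\Q)\to H_1(M;\Q)$ has dimension $\tfrac12\dim H_1(\partial M;\Q)$. Because $D$ is a disk, $H_1(F)\cong H_1(\Sigma)$ via inclusion. Let $V$ be the kernel of $H_1(F;\Z)\to H_1(M;\Q)$. Then $V$ is a direct summand of rank $g(F)$, since it is the intersection of a rational subspace with a lattice and is therefore primitive.

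Finally, show that the Seifert form vanishes on $V$. Take $x,y\in V$ represented by curves on $F$. Some multiples $ax$ and $by$ bound $2$-chains $S_x,S_y$ in $M$. Push $S_y$ off $M$ in the positive normal direction to get $S_y^+$, which is disjoint from $S_x$. The linking number $\mathrm{lk}(x,y^+)$ in $S^3=\partial B^4$ equals the intersection number in $B^4$ of the chains bounded by $ax$ and $by^+$, divided by $ab$. That intersection number is $S_x\cdot S_y^+=0$, so $B_F(x,y)=0$.

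The main obstacle is the topological-category input needed to construct $M$: normal bundles and transversality for locally flat surfaces. I would simply cite Freedman--Quinn for both. The remaining steps, the half-lives-half-dies dimension count and the linking-number computation, are routine. They are exactly the proof of \cite[Proposition 8.17]{Lickorish}, which can be cited directly.
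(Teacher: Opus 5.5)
Your proposal is correct. It is the same classical argument the paper cites from Lickorish for this statement and runs explicitly in its proof of Proposition~\ref{prop:halfdies}: close $F$ up with the slice disk, use that the exterior has $H_1\cong\Z$ to get a map to $S^1$ whose transverse preimage is a $3$-manifold $M$ with $\partial M$ the closed surface, then apply half-lives-half-dies and disjoint push-offs $S_x\subset M$, $S_y^+\subset M^+$ to kill the Seifert form on the rational kernel. One technical point: pushing $F$ into $B^4$ takes $x$ and $y^+$ out of $S^3$ and leaves the map on $S^3\smallsetminus\nu K$ uncontrolled, whereas the paper keeps $F\subset S^3$, defines the map on $S^3\smallsetminus\nu K$ via a collar of $F$, and extends it over the slice-disk exterior, so that $M\cap S^3=F$ and the push-off of $M$ meets $S^3$ exactly in $F^+$, which makes your final linking-number step literal.
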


We prove an analogous statement in the context of relative concordance.

\begin{prop} \label{prop:halfdies}
If $K\subset S^3$ is slice relative to $R$, then for some $r,s\geq 0$ there exists a $2(r+s)$-tuple of nonnegative integers $n$ such that for any Seifert surfaces $F_K$ for $K$ and $F_R$ for $R$, there is a half-dimensional summand of $H_1(F_K(R,r,n))$ on which the corresponding Seifert form vanishes. 
That is, there exist $r,s\geq 0$ and $n$ such that the Seifert form for $K(R,r,n)$ with respect to $F_K(R,r,n)$ is metabolic.
\end{prop}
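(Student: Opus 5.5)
We adapt the classical argument behind Proposition~\ref{prop:metabolizer}: the Seifert surface together with a surface in the $4$-ball bounds a $3$-manifold, and the kernel of inclusion on $H_1$ is a metabolizer. Let $K$ be slice relative to $R$, with relative slice disk $\Delta\subset B^4\cup(\sqcup h_i)$. As explained after Definition~\ref{def:relslice}, after making $\Delta$ transverse to the cocores and removing the $2$-handles, we get a connected planar surface $P\subset B^4$. Its boundary is the link $K(R,r,n)\subset S^3$ for suitable $r,s$ and $n$. Here $n_i^\pm$ count the parallel copies of the core of $h_i$ met by $\Delta$, with signs recording orientation. These data depend only on $\Delta$, not on the choice of Seifert surfaces. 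Fix any Seifert surfaces $F_K$ and $F_R$, and form $F=F_K(R,r,n)$ from disjoint parallel copies of $F_R$ and its reflections. Since the sublinks are split, the copies can be taken pairwise disjoint and disjoint from $F_K$.

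\textbf{Step 1: build a closed surface and a $3$-manifold.} Push the interior of $F$ slightly into $B^4$ and glue it to $P$ along $\partial F=\partial P$. This gives a closed orientable surface $\Sigma=F\cup P$. We must check orientability: the orientation of $P$ induced from $\Delta$ must agree along each boundary component with the boundary orientation of the corresponding copy of $\pm F_R$. That is exactly how copies of $R$ versus $r(R)$ (and $-R$ versus $m(R)$) were assigned to parallel cores, so this holds by the bookkeeping convention. $\Sigma$ is connected because $P$ is connected. Since $H_2(B^4)=0$ and the normal bundle is trivial, $\Sigma$ bounds a compact oriented $3$-manifold $M\subset B^4$. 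This is the standard Pontryagin--Thom argument: extend $\Sigma$'s map to $S^1$ on the complement, which is possible because $H^1(B^4\smallsetminus\Sigma)\cong H_2(\Sigma)$ via Alexander duality.

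\textbf{Step 2: half-lives-half-dies.} Let $V=\ker\bigl(H_1(\Sigma;\Q)\to H_1(M;\Q)\bigr)$, which has dimension $g(\Sigma)$. Since $P$ is planar, $H_1(P)$ is generated by boundary curves. Hence $H_1(F)\to H_1(\Sigma)$ is surjective, with kernel spanned by the classes of the boundary components, and $\operatorname{rank}H_1(F)=2g(\Sigma)+(\#\partial F-1)$. Take the preimage $W\subset H_1(F;\Z)$ of $V$, made primitive, i.e.\ replaced by $\{x: mx\in W \text{ for some } m\neq 0\}$ so that it becomes a summand. Then $W$ has rank $g(\Sigma)+\#\partial F-1$. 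This is exactly half the rank of $H_1(F)$ only when $F$ is connected. So here the naive count fails, and the accounting of the boundary classes is the main point to handle; see Step~3.

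\textbf{Step 3: vanishing of the Seifert form, and the main obstacle.} For $x,y\in W$, choose representatives bounding $2$-chains $c_x,c_y$ in $M$. Push $c_y$ off $M$ in the positive normal direction to get $c_y^+$, which is disjoint from $c_x$. Then $\mathrm{lk}(x,y^+)=c_x\cdot c_y^+=0$, so $B_F$ vanishes on $W$. For a disconnected Seifert surface of a boundary link, $H_1(F)$ has rank $2g(F)$ with no boundary classes, since each component has a single boundary circle and $H_1$ of a surface with one boundary circle has even rank $2g$. So the correct setup is to note that the boundary circles are null-homologous in $F$. Then $H_1(F)\to H_1(\Sigma)$ is injective with image a summand, $\operatorname{rank}H_1(F)=2\sum g_j$, and $g(\Sigma)=\sum g_j$ because $P$ is planar and connected. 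Hence $W=V\cap H_1(F)$ has rank exactly $\frac12\operatorname{rank}H_1(F)$, as required. The step I expect to need the most care is the genus/rank computation for $\Sigma$ together with the orientation compatibility in Step~1, since these are where the relative setting differs from Proposition~\ref{prop:metabolizer}. The vanishing argument itself is verbatim classical. Finally, independence from the choice of $F_K,F_R$ is automatic, since $r,s,n$ were fixed by $\Delta$ before the Seifert surfaces were chosen.
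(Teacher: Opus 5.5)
Your proposal is correct and is essentially the paper's proof: glue the planar surface $P$ to $F_K(R,r,n)$, obtain a $3$-manifold $M\subset B^4$ bounded by the resulting closed surface via a map to $S^1$, and run the half-lives-half-dies argument, using that the closed surface has the same genus and $H_1$ as $F_K(R,r,n)$. Two things to tidy up. First, the rank count in your Step~2 is wrong for this disconnected $F$ (each component has one boundary circle), so drop it in favour of your Step~3 version, where Mayer--Vietoris in fact gives an isomorphism $H_1(F)\cong H_1(\Sigma)$. Second, like the paper, keep $F$ in $S^3$ with $M\cap S^3=F$ rather than pushing it into $B^4$, so that $\mathrm{lk}(x,y^+)=c_x\cdot c_y^+$ applies directly, after passing to nonzero multiples of $x$ and $y$ that bound in $M$.
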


\begin{proof}
Just as in the classical setting, this is a consequence of the ``half lives, half dies" principle. 
Suppose $K$ is slice relative to $R$, so that $K(R,r,n)$ bounds some connected genus 0 surface $C$ in $B^4$, for some $r,s\geq 0$ and $n$. 
Choose any Seifert surfaces $F_K$ for $K$ and $F_R$ for $R$, and let $F$ be the closed surface $F=C\cup_{K(R,r,n)}F_K(R,r,n)$, where $F_K(R,r,n)$ is the surface described above. 

By a Mayer-Vietoris argument using the decomposition $B^4=E_C\cup_{\partial\nu C}\nu C$, where $E_C$ is the exterior of $C$, we see that $H_1(E_C)\cong\Z$, generated by the meridian of the surface.
Define a map $E_{K(R,r,n)}\to S^1\subset\C$ which maps the regular neighborhood $F_K(R,r,n)\times[-1,1]$ of $F_K(R,r,n)$ by projection to $[-1,1]$ followed by the map $t\mapsto e^{t\pi i}$, and which sends the rest of $E_{K(R,r,n)}$ to $-1\in S^1$. 
This maps the meridian of each component of $K(R,r,n)$ to $S^1$ by a degree 1 map.
Extend this map to $\partial\nu C\cong C\times S^1$ by a map which on $\partial\nu C$ is isotopic to projection to the second factor.
This extends the map to all of $\partial E_C$.
Since $S^1$ is a model for the classifying space $K(\Z,1)$ and $H_1(E_C)\cong\Z$, there is no obstruction to extending this map to $E_C$.
The transverse preimage of $1\in S^1$ yields a 3-manifold $M\subset B^4$ with $\partial M=F$. 

We now sketch the remainder of the proof, which is identical to the classical setting, and refer the reader to \cite[Chapter 8]{Lickorish} for more details.
Let $g$ be the genus of $F$, which is equal to the genus of $F_K(R,r,n)$.
The inclusion-induced homomorphism $H_1(F;\Q)\to H_1(M;\Q)$ has kernel $K_\Q\cong\Q^g$, and there is a basis $\{\alpha_1,\dots,\alpha_g,\beta_1,\dots,\beta_g\}$ for $H_1(F;\Z)$ such that the images of the $\alpha_i$ in $H_1(F;\Q)$ form a basis for $K_\Q$.
In particular, $\alpha_i\mapsto 0$ under $H_1(F;\Z)\to H_1(M;\Q)$ for all $1\leq i\leq g$.
The Seifert form for $F_K(R,r,n)$ vanishes on $\text{span}(\{\alpha_1,\dots,\alpha_g\})$, which is a half-dimensional summand of $H_1(F;\Z)=H_1(F_K(R,r,n);\Z)$.
\end{proof}

This result motivates the definitions of \emph{algebraically relatively slice} knots and the \emph{algebraic relative concordance groups}. 
Just as with the classical algebraic concordance group, the algebraic relative concordance groups will be defined using Witt equivalence of Seifert forms.

\begin{defn} \label{def:Seifert}
A \emph{knot Seifert form} is a bilinear form $B$ on $\Z^{2g}$ for some $g$ satisfying $\det(B-B^T)=\pm 1$.
A \emph{knot Seifert matrix} is a matrix corresponding to some knot Seifert form.
\end{defn}

Now fix a knot type $R\subset S^3$ and a Seifert surface $F_R$ for $R$.
Denote the corresponding (knot) Seifert form by $B_R$. 
Given $r,s\geq 0$ and $n=((n_1^+,n_1^-),\dots,(n_{r+s}^+,n_{r+s}^-))$, denote the Seifert form for the link $R(r,n)$ with respect to the surface $F_R(r,n)$ by $B_R(r,n)$.

\begin{defn} \label{def:algrelslice}
A knot Seifert form $B$ is \emph{algebraically slice relative to $R$}, or \emph{algebraically relatively slice}, if there exist $r,s\geq 0$ and $n=((n_1^+,n_1^-),\dots,(n_{r+s}^+,n_{r+s}^-))$ such that the form $B\oplus B_R(r,n)$ is metabolic, that is, there is a half-dimensional summand on which the form $B\oplus B_R(r,n)$ vanishes.
\end{defn}

Note that algebraically slice knots are algebraically slice relative to any $R$ with $r,s=0$.

\begin{prop} \label{prop:welldef}
The above definition is independent of the choice of Seifert surface $F_R$.
\end{prop}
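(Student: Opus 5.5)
Fix two Seifert surfaces $F_R$ and $F_R'$ for $R$, with knot Seifert forms $B_R$ and $B_R'$. Suppose $B\oplus B_R(r,n)$ is metabolic for some $r,s,n$. The goal is to show that $B\oplus B_R'(r',n')$ is metabolic for suitable $r',s',n'$; I expect $r'=r$, $s'=s$, $n'=n$ to work. The plan is to reduce to Witt-class statements about $B_R$ and $B_R'$, using the classical fact that Seifert forms of the same knot are $S$-equivalent and hence algebraically concordant (Witt equivalent in Levine's sense).

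First I will record how $B_R(r,n)$ is built from $B_R$. The copies of $F_R$, $-F_R$, $-m(F_R)$ and $m(F_R)$ can be taken pairwise disjoint. Parallel copies lying inside one ball can be pushed off each other in the normal direction so that linking numbers between curves on distinct components vanish. Here I use that linking between a curve on one parallel surface and a curve on a parallel copy reduces to a Seifert-form value; I must check it is zero or at least handle it. This is the subtle point: parallel copies of $F_R$ are not split, so cross terms $\text{lk}(x_i,y_j^+)=B_R(x,y)$ or $B_R^T(x,y)$ appear depending on stacking order. So $B_R(r,n)$ is a block matrix whose diagonal blocks are $\pm B_R$ or $\pm B_R^T$ (reversal gives $B_R^T$, mirror gives $-B_R$). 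Its off-diagonal blocks are copies of $B_R$ or $B_R^T$ (or their negatives), arranged in a fixed upper/lower triangular pattern determined only by $n$ and the stacking. Crucially, this pattern is the same formal expression in $B_R$ whether we use $F_R$ or $F_R'$; write $B_R(r,n)=\Phi_{r,n}(B_R)$.

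Second, a stabilization $F_R\to F_R'$ (adding a tube) changes $B_R$ by the classical elementary enlargement $B_R\mapsto\begin{pmatrix}B_R&\ast&0\\ \ast&\ast&1\\0&0&0\end{pmatrix}$ (or its transpose variant). Since any two Seifert surfaces of $R$ are related by stabilization, destabilization and isotopy, it suffices to treat one stabilization and a change of basis. A change of basis $P$ in $H_1(F_R)$ becomes the block-diagonal change of basis $\mathrm{diag}(P,\dots,P)$ on all copies; metabolizers are carried along. For a stabilization performed simultaneously on every parallel copy (tubes stacked in parallel), I will exhibit a change of basis on $H_1(F_K\sqcup F_R'(r,n))$. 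It should split off from $B\oplus\Phi_{r,n}(B_R')$ a direct summand which is metabolic, with the new classes $a_i$ (the dual curves with the zero row) spanning half of it, leaving a complement congruent to $B\oplus\Phi_{r,n}(B_R)$. Then I will invoke the standard lemma that if $M\oplus N$ is metabolic with $N$ metabolic, then $M\oplus N$ stays metabolic after the relevant replacement. More simply, I will directly build the metabolizer: take the old metabolizer $V$, transported to the complement, and add the span of the new $a_i$. I must check that $B(a_i,\cdot)$ vanishes on $V$ as well as $B(\cdot,a_i)$. The enlargement's zero row and column for the dual curves make this work once the meridian curves of the tubes are chosen to avoid $V$.

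The main obstacle is the cross-term bookkeeping between parallel copies in step one. I must ensure the stabilized surfaces $F_R'(r,n)$ can be realized as nested parallel copies whose cross linking forms are exactly $\Phi_{r,n}(B_R')$, with the new tube classes having zero pairing across copies. I would handle this by choosing each tube very thin and placing the copies' tubes in concentric position, so that the tube meridians bound disks disjoint from all other copies. A fallback, if cross terms prove intractable, is to argue topologically: the surfaces are isotopic after stabilizations in the complement of $K$, and metabolicity of $B\oplus B_R(r,n)$ is then preserved by the same enlargement argument applied to the whole link $K(R,r,n)$.
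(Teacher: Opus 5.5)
Your geometric reduction matches the paper: you perform each (de)stabilization simultaneously on all parallel copies, so the form keeps the shape $\Phi_{r,n}(\cdot)$ with the same $r,s,n$. The paper obtains this from parallel copies of the $3$-manifold $M\subset E_R\times[0,1]$ cobounded by $F_R$ and $F_R'$. It then cites Livingston--Naik for the fact that having a metabolizer is invariant under S-equivalence.

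\textbf{The gap.} The gap is in the algebra you put in place of that citation. Adjoining the tube meridians $a^{(j)}$ to an old metabolizer $V$ only shows that metabolicity passes \emph{up} a stabilization. Your claim that it therefore suffices to treat one stabilization is false. In general $F_R$ and $F_R'$ are related only by sequences that also involve destabilizations; even via a common stabilization $F_R''$, you must come back down to $F_R'$. For a destabilization you are handed an arbitrary metabolizer $W$ of $B\oplus\Phi_{r,n}(B_R')$, which need not contain the $a^{(j)}$ nor lie in the span of the old classes.

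Your splitting claim does not rescue this. The enlarged form is not in general an orthogonal sum of $B\oplus\Phi_{r,n}(B_R)$ and a metabolic block, because the pairings $B(h,e)$ of old classes with a tube longitude $e$ obstruct it. Concretely, if you replace $h$ by $h+\alpha e+\beta a$, orthogonality to $a$ forces $\alpha=0$, and then $B(h+\beta a,e)=B(h,e)$ is unchanged. The lemma you gesture at would have to be Levine's cancellation lemma: if $M\oplus N$ and $N$ are metabolic, then so is $M$, using $\det(B-B^T)\neq 0$. That is exactly the missing ingredient.

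\textbf{How to close it.} You can supply the destabilization step directly.
\begin{enumerate}
\item Work over $\mathbb{Q}$. Let $O$ be the span of the old classes (including $H_1(F_K)$) and $U$ the span of the $a^{(j)}$.
\item $U$ is isotropic and two-sided orthogonal to $O\oplus U$. Meanwhile $\omega=B-B^T$ pairs $U$ perfectly with the tube longitudes, so $O\oplus U=U^{\perp_\omega}$.
\item A metabolizer $W$ is Lagrangian for the nondegenerate form $\omega$, so $W\cap U^{\perp_\omega}=(W+U)^{\perp_\omega}$.
\item A dimension count then shows that $W'=(W\cap(O\oplus U))+U$ is again isotropic of half dimension.
\item The projection of $W'$ to $O$ is a rational metabolizer for $B\oplus\Phi_{r,n}(B_R)$. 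Intersecting it with the lattice gives a half-rank summand.
\end{enumerate}

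\textbf{On the obstacle you flagged.} It is not one: the cross terms come for free because $F_R'(r,n)$ consists of parallel copies of $F_R'$. Your proposed remedy is in fact false. With concentric tubes, the meridian of one copy's tube links the longitudes of the copies on one side of it. It is also unnecessary: you only need the $a^{(j)}$ to pair trivially, in both orders, with the old classes and with each other, and that does hold.
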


\begin{proof}
We show that any two Seifert surfaces $F_R$ and $F_R'$ for $R$ yield Seifert forms $B_R$ and $B_R'$ such that $B\oplus B_R(r,n)$ and $B\oplus B_R'(r,n)$ are \emph{s-equivalent}.
Recall that s-equivalence is the relation on Seifert forms which accounts for (de)stabilizations of a Seifert surface.
The existence of a metabolizer is preserved by s-equivalence \cite{LivingstonNaik}.

Let $F_R$ and $F_R'$ be Seifert surfaces for $R$.
Recall that $F_R$ and $F_R'$ differ by a sequence of (de)stabilizations (hence the corresponding Seifert forms $B_R$ and $B_R'$ are s-equivalent).
We recall a standard proof of this fact, which is outlined in \cite[Section 9]{Gordon}, to see that $B\oplus B_R(r,n)$ and $B\oplus B_R'(r,n)$ are s-equivalent.

Similar to our proof of Proposition~\ref{prop:metabolizer}, we may begin with degree 1 maps $f$ and $g$ from the knot exterior $E_R$ to $S^1$ with transverse preimages $f^{-1}(1)=F_R$ and $g^{-1}(1)=F_R'$.
These maps extend to a homotopy rel boundary $H:E_R\times[0,1]\to S^1$.
The transverse preimage $H^{-1}(1)$ is a 3-manifold $M\subset E_R\times[0,1]$ with boundary $F_R\cup(R\times[0,1])\cup F_R'$.
After discarding closed components, a handle deomposition of this 3-manifold relative to $F_R$ yields the desired sequence of (de)stabilizations.
The same argument applies to $-m(F_R)$ and $-m(F_R')$.

The transverse preimage $H^{-1}(1)$ also pulls back the normal bundle of $1\in S^1$ to the normal bundle of $M$, so parallel copies of $F_R$ and $F'_R$ (or their orientation reverses) cobound parallel copies of $M$ (or its orientation reverse) in $E_R\times[0,1]$.
Again, the same argument applies to parallel copies of $-m(F_R)$ and $-m(F_R')$ (and their orientation reverses).
In particular, this is true for split collections of appropriate numbers of parallel copies of $F_R$ and $F'_R$ as well as $-m(F_R)$ and $-m(F'_R)$ (and their orientation reverses) corresponding to the link $R(r,n)$.
The same handle decompositon relative to $F_R$ on each copy of $M$ (and corresponding decompositions on copies of $-M$, $-m(M)$, and $m(M)$) yields a sequence of (de)stabilizations which correspond to an s-equivalence between the forms $B_R(r,n)$ and $B_R'(r,n)$.
As $B_R(r,n)$ and $B_R'(r,n)$ are s-equivalent, so are $B\oplus B_R(r,n)$ and $B\oplus B_R'(r,n)$.
\end{proof}

As in the setting of classical algebraic concordance, a relative version of Definition~\ref{def:algrelslice} yields an equivalence relation on knot Seifert forms.

\begin{defn} \label{def:algrelconc}
Two knot Seifert forms $B_1$ and $B_2$ are \emph{algebraically concordant relative to $R$}, or \emph{algebraically relatively concordant}, if the form $B_1\oplus -B_2$ is algebraically slice relative to $R$.
\end{defn}

Before showing this is an equivalence relation, recall that given a Seifert form $B_R$ for $R$, the forms $B_R^T$, $-B_R$, and $-B_R^T$ are Seifert forms for $r(R)$, $m(R)$, and $-R$, respectively; see the proof of \cite[Proposition 6.12]{Lickorish}. 
Further recall that the forms $B_R$ and $B_R^T$ are algebraically concordant, that is, $B_R\oplus -B_R^T$ is metabolic.

\begin{prop} \label{prop:algequiv}
Algebraic concordance relative to $R$ is an equivalence relation on knot Seifert forms.
\end{prop}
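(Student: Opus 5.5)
We check reflexivity, symmetry and transitivity directly from Definitions~\ref{def:algrelslice} and~\ref{def:algrelconc}, using two standard facts about metabolic forms. First, the direct sum of metabolic forms is metabolic. Second, metabolicity is invariant under congruence $B\mapsto P^TBP$ with $P\in GL(\Z)$; in particular, permuting or regrouping direct summands preserves it.

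\emph{Reflexivity.} For any knot Seifert form $B$, the form $B\oplus -B$ is metabolic. If $B$ has matrix $A$ on $\Z^{2g}$, the diagonal $\{(x,x)\}$ is a half-dimensional summand, and $A\oplus -A$ vanishes on it. So $B$ is algebraically slice relative to $R$ with $r=s=0$.

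\emph{Symmetry.} Suppose $B_1\oplus -B_2\oplus B_R(r,n)$ is metabolic. We must produce $r',s',n'$ such that $B_2\oplus -B_1\oplus B_R(r',n')$ is metabolic. Negating a metabolic form leaves it metabolic, so $-B_1\oplus B_2\oplus -B_R(r,n)$ is metabolic. It remains to write $-B_R(r,n)$ as some $B_R(r',n')$ up to congruence and up to adding metabolic summands.
\begin{itemize}
\item The summands of $B_R(r,n)$ are copies of $B_R$, $B_R^T$, $-B_R^T$ and $-B_R$, the forms of $R$, $r(R)$, $-R$ and $m(R)$.
\item Negation sends $B_R\mapsto -B_R$ and $B_R^T\mapsto -B_R^T$, which are exactly the forms of $m(R)$ and $-R$. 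Symmetrically, $-B_R^T\mapsto B_R^T$ and $-B_R\mapsto B_R$.
\item So $-B_R(r,n)$ is congruent, after reordering summands, to $B_R(s,n')$. Here $n'$ exchanges the roles of the first $r$ and last $s$ handles, with a copy of $-R$ going to $R$ and $m(R)$ going to $r(R)$, and vice versa.
\end{itemize}
Any mismatch in how copies are distributed among handles is irrelevant, since Definition~\ref{def:algrelslice} only sees the direct sum of forms. A reordering of summands then gives the claim.

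\emph{Transitivity.} Suppose $B_1\oplus -B_2\oplus B_R(r,n)$ and $B_2\oplus -B_3\oplus B_R(r',n')$ are metabolic. Their direct sum is metabolic. After reordering, it is
\[
B_1\oplus -B_3\oplus (B_2\oplus -B_2)\oplus B_R(r+r',n''),
\]
where $B_R(r,n)\oplus B_R(r',n')$ is itself a form of type $B_R(r+r',n'')$ with $s+s'$ negative handles. The summand $B_2\oplus -B_2$ is metabolic by the reflexivity computation. We then need the cancellation lemma: if $X\oplus M$ is metabolic and $M$ is metabolic, then $X$ is Witt-trivial.

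This cancellation step is the main obstacle. Over $\Z$ it is not literally true that $X$ itself is metabolic. Classically, one handles this by passing to $\Q$, where cancellation holds for nonsingular forms, and defining $\mathcal{AC}$ via rational Witt classes. Alternatively, one uses Levine's result that $\mathcal{AC}$ is well defined: algebraic concordance of knot Seifert forms is an equivalence relation, so $X\oplus M$ metabolic with $M$ metabolic forces $X$ to be algebraically slice.

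I would follow the paper's convention and invoke the classical fact via the quoted observation that $B_R\oplus -B_R^T$ is metabolic. Concretely, the classical argument (Lickorish, Chapter 8) shows the following: if $X\oplus M$ is metabolic with $M$ metabolic and $\det(X-X^T)=\pm1$, then $X$ is metabolic. One applies this with
\[
X=B_1\oplus -B_3\oplus B_R(r+r',n''),\qquad M=B_2\oplus -B_2.
\]
Since $X$ is a direct sum of knot Seifert forms, $X-X^T$ is unimodular, so the hypothesis $\det(X-X^T)=\pm1$ holds. With this input, $B_1\oplus -B_3$ is algebraically slice relative to $R$, completing transitivity.
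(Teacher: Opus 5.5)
Your proposal is correct and follows the paper's proof essentially step for step: the metabolic form $B\oplus -B$ (your diagonal metabolizer) for reflexivity, negation together with $-B_R(r,n)\cong B_R(s,n')$ for symmetry, and regrouping the direct sum as $B_1\oplus -B_3\oplus B_R(r+r',n'')\oplus(B_2\oplus -B_2)$ followed by Witt cancellation for transitivity. There are two minor slips. First, negation sends the form $-B_R^T$ of $-R$ to $B_R^T$ (the form of $r(R)$) and the form $-B_R$ of $m(R)$ to $B_R$ (the form of $R$), so $n'$ should swap $n_i^+$ and $n_i^-$ as in the paper, not pair $-R$ with $R$. Second, your aside that cancellation fails over $\Z$ is unfounded when $\det(X-X^T)=\pm1$: a rational metabolizer meets $\Z^{2g}$ in a half-rank direct summand, which is exactly why the integral cancellation lemma you end up invoking holds.
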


\begin{proof}
This follows from a similar argument to the classical case. 
Given any knot Seifert form $B$, the form $B\oplus -B$ is algebraically slice, and is therefore algebraically relatively slice, so $B\sim B$.

If $B_1\oplus -B_2$ is algebraically relatively slice, where $B_1\oplus -B_2\oplus B_R(r,n)$ is metabolic, then a change of basis shows $-B_2\oplus B_1\oplus B_R(r,n)$ is metabolic, hence \[B_2\oplus -B_1\oplus B_R(s,n')\cong B_2\oplus -B_1\oplus -B_R(r,n)\] is metabolic, where \[n'=((n_{r+1}^-,n_{r+1}^+),\dots,(n_{r+s}^-,n_{r+s}^+),(n_1^-,n_1^+),\dots(n_r^-,n_r^+)),\] and where the isomorphism again follows from a change of basis.
Thus, $B_1\sim B_2$ implies $B_2\sim B_1$. 

Transitivity follows from the Witt cancellation lemma; see \cite{LivingstonNaik}. 
If $B_1 \oplus -B_2 \oplus B_R(r_1,n_1)$ and $B_2\oplus -B_3 \oplus B_R(r_2,n_2)$ are metabolic, then so is their direct sum, which after a change of basis has the form \[B_1\oplus-B_3\oplus B_R(r_1,n_1)\oplus B_R(r_2,n_2)\oplus B_2\oplus -B_2.\] 
By a change of basis, $B_R(r_1,n_1)\oplus B_R(r_2,n_2)\cong B_R(r_1+r_2,n)$, where $n$ is the tuple beginning with the first $r_1$ pairs of $n_1$, followed by the first $r_2$ pairs of $n_2$, and then by the last $s_1$ and $s_2$ pairs of $n_1$ and $n_2$, respectively.
Thus, since $B_2\oplus -B_2$ is metabolic, the Witt cancellation lemma implies  $B_1\oplus -B_3\oplus B_R(r_1+r_2,n)$ is metabolic.
Thus, $B_1\sim B_2$ and $B_2\sim B_3$ implies $B_1\sim B_3$.
\end{proof}

Just as in the classical setting, we obtain a group structure on the set of algebraic concordance classes relative to $R$ of knot Seifert forms.

\begin{defn} \label{def:alggroup}
The \emph{algebraic concordance group relative to $R$}, or \emph{algebraic relative concordance group}, is the set of algebraic concordance classes relative to $R$ of knot Seifert forms with the operation of direct sum, denoted $\mathcal{AC}_R$.
\end{defn}

As algebraically slice knots are algebraically relatively slice, the above definition yields a surjective homomorphism \[\mathcal{A}\text{rel}_R:\mathcal{AC}\twoheadrightarrow\mathcal{AC}_R\] sending an algebraic concordance class to the corresponding algebraic relative concordance class.
In the language of Definition~\ref{def:algrelslice}, Proposition~\ref{prop:halfdies} states that relatively slice knots are algebraically relatively slice, hence there is a homomorphism \[A_R:\mathcal{C}_R\to\mathcal{AC}_R.\]
It fits into the commutative square below and is therefore surjective.

\begin{equation*}
\begin{tikzcd}
\mathcal{C} \arrow[r,twoheadrightarrow, "{\text{rel}}_R"] \arrow[d,twoheadrightarrow,"A"']& \mathcal{C}_R \arrow[d,"A_R"] \\
\mathcal{AC} \arrow[r,twoheadrightarrow, "\mathcal{A}{\text{rel}}_R"'] & \mathcal{AC}_R
\end{tikzcd}
\end{equation*}

As we see in Section~\ref{sec:cables}, the form $B_R(r,n)$ is also the Seifert form of a knot which is the connected sum of cables of $R$ and $-R$, the number and type of which are dictated by $n$.
Thus, a knot Seifert form $B$ is algebraically slice relative to $R$ if and only if there exist $r,s\geq 0$ and $n$ such that $B$ and $-B_R(r,n)$ are algebraically concordant.
It will sometimes be convenient to speak of the algebraic concordance class of $B_R(r,n)$ (as a knot Seifert form), despite the fact that algebraic concordance of links is a much more subtle notion; recent progress on this includes \cite{CCS,Simian}.


\section{Algebraic relative concordance and cables} \label{sec:cables}

In this section, we explicitly describe the relationship between the algebraic concordance group $\mathcal{AC}$ and the algebraic relative concordance group $\mathcal{AC}_R$ for any fixed $R$, as summarized in Proposition~\ref{prop:quotient} below.
For a knot $R\subset S^3$, recall that $\mathcal{R}\leq\mathcal{C}$ is the subgroup consisting of knots which are slice relative to $R$.
For $n\in\Z$, let $K_{n,1}$ be the $(n,1)$-cable of $K\subset S^3$.
Recall that these cables are obtained by performing specific satellite operations.
For our study of the group $\mathcal{AC}_R$, the relevant cables are $R_{n,1}$ and $(-R)_{n,1}$, $n\in\Z$.

\begin{prop} \label{prop:quotient}
For any fixed knot $R\subset S^3$, the homomorphism $\mathcal{A}\text{rel}_R:\mathcal{AC}\twoheadrightarrow\mathcal{AC}_R$ has kernel $A(\mathcal{R})=\langle[R_{n,1}],[(-R)_{n,1}]\rangle_{n\in\Z}$.
Thus, we have isomorphisms \[\mathcal{AC}_R\cong\mathcal{AC}/A(\mathcal{R})=\mathcal{AC}/\langle[R_{n,1}],[(-R)_{n,1}]\rangle_{n\in\Z}.\]
\end{prop}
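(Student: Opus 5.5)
The plan is to prove the chain of inclusions
\[\langle[R_{n,1}],[(-R)_{n,1}]\rangle_{n\in\Z}\subseteq A(\mathcal{R})\subseteq\ker(\mathcal{A}\text{rel}_R)\subseteq\langle[R_{n,1}],[(-R)_{n,1}]\rangle_{n\in\Z}.\]
Together these give the equality of the kernel with both groups, and the isomorphisms follow from surjectivity of $\mathcal{A}\text{rel}_R$.

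\emph{Middle inclusion.} This is formal. If $K\in\mathcal{R}$, then $\text{rel}_R[K]=0$. Commutativity of the square $\mathcal{A}\text{rel}_R\circ A=A_R\circ\text{rel}_R$ then gives $\mathcal{A}\text{rel}_R(A[K])=0$.

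\emph{First inclusion.} I will show directly that $R_{n,1}$ and $(-R)_{n,1}$ are slice relative to $R$.
\begin{enumerate}
\item Attach one $0$-framed $2$-handle $h$ to $B^4$ along a split copy of $R$ (respectively $-R$, for the second family).
\item Take $|n|$ parallel copies of the core of $h$, using the $0$-framing. Orient all of them the same way, reversing all orientations if $n<0$.
\item Join these cores by $|n|-1$ bands lying in a collar of $S^3$. Each band carries one half of a full twist's worth of the clasp, so that the boundary is the $(n,1)$-cable. Since the bands form a tree joining the disks, the result is a single embedded disk.
\item Tube the boundary of this disk into the complement of the ball containing the attaching circle. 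This realizes $R_{n,1}$ as the boundary of a disk in $B^4\cup h$.
\end{enumerate}
For $n=0$ the cable is the unknot and there is nothing to prove. So every cable class lies in $A(\mathcal{R})$.

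\emph{Last inclusion.} Suppose $[B]\in\ker(\mathcal{A}\text{rel}_R)$. By Definition~\ref{def:algrelslice}, $B\oplus B_R(r,n)$ is metabolic for some $r,s,n$. This holds in the classical Witt sense, so $[B]=-[B_R(r,n)]$ in $\mathcal{AC}$, where $B_R(r,n)$ is viewed as a knot Seifert form. It is a direct sum of the blocks $B_R(1,(n_i^+,n_i^-))$, together with the analogous blocks for $-R$. It therefore suffices to show that each block is algebraically concordant to the Seifert form of $R_{n_i^+-n_i^-,1}$ (respectively $(-R)_{n_i^+-n_i^-,1}$).

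This identification is the main obstacle. My approach is the following.
\begin{enumerate}
\item Band together the $n_i^++n_i^-$ parallel copies of $F_R$ and $-F_R$ in $F_R(r,n)$ along a tree of bands in the complement. This produces a connected Seifert surface whose first homology is unchanged, since tree bands add no homology.
\item The boundary of this surface is a satellite $P(R)$. Here $P$ is a pattern in the solid torus with winding number $w=n_i^+-n_i^-$, and the Seifert matrix of $P(R)$ is exactly $B_R(1,(n_i^+,n_i^-))$.
\item By the standard satellite formula for Seifert forms, which Witt-splits a Seifert form of $P(R)$ as a form for $P(U)$ plus the form of $R_{w,1}$, we get $[P(R)]=[P(U)]+[R_{w,1}]$ in $\mathcal{AC}$. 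Here $U$ is the unknot.
\item The same banding performed on disks bounded by $U$ shows that $P(U)$ bounds a genus-zero Seifert surface. Hence $[P(U)]=0$, and the block is algebraically concordant to $R_{w,1}$.
\end{enumerate}
I will carry out the satellite-formula step concretely with matrices: order the copies with all positively oriented ones first, compute the linking blocks between parallel push-offs in terms of $B_R$ and $B_R^T$, and exhibit a change of basis together with a metabolizer. That matrix computation is where I expect most of the work to lie.
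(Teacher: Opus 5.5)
Your argument is correct. Its geometric ingredients match the paper's. The cables are relatively slice via parallel cores of one $2$-handle joined by half-twisted bands, and $A(\mathcal{R})\subseteq\ker(\mathcal{A}\text{rel}_R)$ follows from Proposition~\ref{prop:halfdies} through the commutative square.

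\textbf{Where the routes differ.} You differ in how you place the kernel inside the cable subgroup, and in the bookkeeping.
\begin{itemize}
\item The paper bands the copies of $F_R$ and $-F_R$ ``in a prescribed way'' so that the boundary is literally $R_{n_i^+-n_i^-,1}$. This makes $B_R(r,n)$ on the nose a Seifert form of a split union of cables. It then proves $\ker(\mathcal{A}\text{rel}_R)=\langle[R_{n,1}],[(-R)_{n,1}]\rangle$ algebraically in both directions, and only afterwards sandwiches $A(\mathcal{R})$. The converse direction needs every combination of cable classes to be realized as $\pm[B_R(r,n)]$.
\item You allow any tree of bands in $\nu R$ and accept that the boundary is merely some satellite $P(R)$ of winding number $w$ with $P(U)$ unknotted. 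You then quote the satellite formula $[P(R)]=[P(U)]+[R_{w,1}]$ in $\mathcal{AC}$. Your cyclic chain of inclusions gets the reverse inclusion for free from the geometric step.
\end{itemize}

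\textbf{What each route buys.} Your route avoids specifying the band choice in the mixed-orientation case, which the paper leaves implicit: untwisted bands cancel oppositely oriented copies, and consistently signed half-twisted bands join the rest. The price is importing the Seifert/Livingston--Melvin satellite formula in place of a purely geometric identification.

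\textbf{The matrix step you flagged.} The computation you expect to be the main obstacle is short.
\begin{itemize}
\item In the stacked surface, the block between distinct copies is $B_R$ or $B_R^T$ according to which copy lies above, independent of orientations. A diagonal block is $B_R$ or $B_R^T$ according to orientation.
\item Hence for adjacent oppositely oriented copies $i,i+1$, the classes $a_i-a_{i+1}$ span an isotropic summand $U$ that is orthogonal, on both sides, to every copy $j\neq i,i+1$.
\item Let $B'$ be the form of the remaining copies. Then $\{(u+c,c)\}$, with $u\in U$ and $c$ in the span of the other copies, is a metabolizer for $B_R(1,(n^+,n^-))\oplus -B'$.
\item Induction reduces you to $|w|$ coherently oriented copies. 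That is the standard Seifert form of $R_{w,1}$.
\end{itemize}
This makes your route self-contained as well, without the satellite formula.
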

By Proposition~\ref{prop:quotient}, we may enlarge the commutative square from the end of Section~\ref{sec:Seifert} to diagram \eqref{diagram:exact}, copied below.
\begin{equation*}
\begin{tikzcd}
0 \arrow[r] & \mathcal{R} \arrow[r,hookrightarrow] \arrow[d,twoheadrightarrow,"A|_\mathcal{R}"'] & \mathcal{C} \arrow[r,twoheadrightarrow,"{\text{rel}}_R"] \arrow[d,twoheadrightarrow,"A"']& \mathcal{C}_R \arrow[d,twoheadrightarrow,"A_R"] \arrow[r] & 0 \\
0 \arrow[r] & A(\mathcal{R}) \arrow[r,hookrightarrow] & \mathcal{AC} \arrow[r,twoheadrightarrow,"\mathcal{A}{\text{rel}}_R"'] & \mathcal{AC}_R \arrow[r] & 0
\end{tikzcd} \tag{$\ast$}
\end{equation*}

\begin{proof}
We first establish the isomorphism $\mathcal{AC}_R\cong\mathcal{AC}/\langle[R_{n,1}],[(-R)_{n,1}]\rangle_{n\in\Z}$, which identifies $\langle[R_{n,1}],[(-R)_{n,1}]\rangle_{n\in\Z}$ as the kernel of $\mathcal{A}\text{rel}_R$.
Then, we show $A(\mathcal{R})=\langle[R_{n,1}],[(-R)_{n,1}]\rangle_{n\in\Z}$.

Fix a Seifert surface $F_R$ for $R$.
Given $r,s$ and $n$, consider the link $R(r,n)$, its Seifert surface $F_R(r,n)$, and the associated Seifert form $B_R(r,n)$.
Banding together each collection of parallel copies of $F_R$ and $-F_R$ (resp. $-m(F_R)$ and $m(F_R)$) in $F_R(r,n)$ in a prescribed way yields a surface with Seifert form $B_R(r,n)$ and boundary $L=\sqcup_{i=1}^{r+s} L_i$, where $L_i=R_{(n_i^+-n_i^-),1}$ for $1\leq i\leq r$ and $L_i=(-R)_{(n_i^+-n_i^-),1}$ for $r+1\leq i\leq r+s$.
Thus, the form $B_R(r,n)$ is also a Seifert form for the boundary link $L$.
It is the direct sum of the Seifert forms of the $L_i$ and therefore represents an element of the subgroup $\langle[R_{n,1}],[(-R)_{n,1}]\rangle_{n\in\Z}\leq\mathcal{AC}$.

Now suppose $B_1$ and $B_2$ are algebraically relatively concordant Seifert forms.
This is equivalent to the existence of a $2(r+s)$-tuple $n$ such that $B_1\oplus-B_2\oplus B_R(r,n)$ is metabolic.
Equivalently, $B_1$ is algebraically concordant to $B_2\oplus-B_R(r,n)$.
Thus, the images of $B_1$ and $B_2$ in the quotient $\mathcal{AC}/\langle[R_{n,1}],[(-R)_{n,1}]\rangle_{n\in\Z}$ agree as $-B_R(r,n)\cong B_R(s,n')\in\langle[R_{n,1}],[(-R)_{n,1}]\rangle_{n\in\Z}$, where $n'$ is as in the proof of Proposition~\ref{prop:algequiv}.
Conversely, if the images of $B_1$ and $B_2$ agree in this quotient, then there exist $r,s$ and $n_1,\dots,n_{r+s}\in \Z$ so that $B_1\oplus -B_2\oplus B_R(r,n)$ is metabolic, where \[n=((\max\{-n_1,0\},\max\{n_1,0\}),\dots(\max\{-n_{r+s},0\},\max\{n_{r+s},0\})).\]
(Specifically, for $1\leq i\leq s$ the integers $n_i$ represent cables of $-R$, and for $s+1\leq i\leq r+s$ they represent cables of $R$.)
Thus, $\ker(\mathcal{A}\text{rel}_R)=\langle[R_{n,1}],[(-R)_{n,1}]\rangle_{n\in\Z}$, and $\mathcal{AC}_R\cong\mathcal{AC}/\langle[R_{n,1}],[(-R)_{n,1}]\rangle_{n\in\Z}$.

Finally, if $K$ is slice relative to $R$, then by Proposition~\ref{prop:halfdies} it is algebraically slice relative to $R$, so $A(\mathcal{R})\leq\ker(\mathcal{A}\text{rel}_R)=\langle[R_{n,1}],[(-R)_{n,1}]\rangle_{n\in\Z}$.
To see the reverse inclusion, we show that all $R_{n,1}$ and $(-R)_{n,1}$ are relatively slice.
Reversing some of the band moves described above, we obtain a genus 0 cobordism $C'$ in $S^3\times[0,1]$ from $R_{n,1}\subset S^3\times\{0\}$ to $|n|$ parallel copies of $R$ (or $r(R)$) in $S^3\times\{1\}$.
We then isotope $C'$ guided by an arc from an interior point of $C'$ to a point $p\in (S^3\times\{1\})\smallsetminus B$, where $B$ is an open ball containing the parallel copies of $R$, otherwise missing $C'$. 
We remove an open disk from $C'$ which is a neighborhood of $p$ in $C'$ after the isotopy.
Call the resulting genus 0 surface $C$.
The $|n|$ parallel copies of $R$ (or $r(R)$) bound parallel disks in a single 0-framed 2-handle $h$ attached to $S^3\times\{1\}$ along $R$.
Capping off $C$ with these disks yields a relative concordance from $R_{n,1}$ to the unknot.
Similarly, we obtain a genus 0 cobordism in $S^3\times[0,1]$ from $(-R)_{n,1}\subset S^3\times\{1\}$ to $|n|$ parallel copies of $R$ (or $r(R)$) in $S^3\times\{1\}$.
After an analogous isotopy and removing an open disk in $S^3\times\{0\}$, we cap off this cobordism with $|n|$ parallel disks in a single 0-framed 2-handle $h$ attached to $S^3\times\{1\}$ along $R$ to obtain a relative concordance from $(-R)_{n,1}$ to the unknot.
Thus, $\langle R_{n,1},(-R)_{n,1} \rangle_{n\in\Z}\leq\mathcal{R}$, hence $\langle[R_{n,1}],[(-R)_{n,1}]\rangle_{n\in\Z}\leq A(\mathcal{R})$.
\end{proof}


\section{Signatures} \label{sec:signatures}

In this section, we examine the signatures of knots which are algebraically slice relative to $R$ and prove the following theorem.
As all signatures we consider are valued in $2\Z$, we will rescale them using the normalization $2\Z\xrightarrow{\cong}\Z$, $2\mapsto 1$.

\summand

Recall that for every symmetric, irreducible polynomial $\lambda_\theta(t)=t^2-2\cos(\theta)t+1\in\R[t,t^{-1}]$, $0<\theta<\pi$, there is an associated homomorphism $\sigma_\theta:\mathcal{AC}\to\Z$ which is surjective for every $\theta$ such that $e^{\theta i}$ is a root of the Alexander polynomial of some knot.
Milnor showed the homomorphisms $\{\sigma_\theta\}_{\theta\in S}$, where $S=\{0<\theta<\pi\,|\,\cos(\theta)=\frac{2k-1}{2k}, k\in\N \}$, yield a surjection $\mathcal{AC}\twoheadrightarrow\Z^\infty$ \cite{Milnor}. 
Defining $\theta_k\in S$ such that $\cos(\theta_k)=\frac{2k-1}{2k}$, Milnor exhibited knots $K_j$ such that $\sigma_{\theta_i}(K_j)=\delta_{ij}$.
Levine later showed that the collection of all such signatures $\sigma_\theta$ is a complete set of invariants for the torsion-free part of $\mathcal{AC}$ \cite{Levine}.
To prove Theorem~\ref{thm:main}, we will leverage the isomorphism $\mathcal{AC}_R\cong\mathcal{AC}/\langle [R_{n,1}],[(-R)_{n,1}]\rangle_{n\in\Z}$ from Proposition~\ref{prop:quotient} and study Milnor's surjection $\mathcal{AC}\twoheadrightarrow\Z^\infty$.
For the convenience of the reader, we recall two equivalent definitions of the signatures $\sigma_\theta$.
For a detailed survey on various definitions of these signatures and how they are related, see \cite{Conway}.


\subsection{Milnor signatures} \label{subsec:Milnorsgn}

First, we recall Milnor's definition \cite{Milnor}. 
For a knot $K\subset S^3$, let $E_K^\infty$ denote the infinite cyclic cover of the knot exterior $E_K$, the cover corresponding to the abelianization homomorphism $\pi_1(E_K)\twoheadrightarrow H_1(E_K;\Z)\cong\Z$. 
Milnor showed that the cup product \[\cup:H^1( E_K^\infty;\R)\times H^1(E_K^\infty,\partial E_K^\infty;\R)\to H^2(E_K^\infty,\partial E_K^\infty;\R)\cong\R\] defines a nonsingular, skew-symmetric $\R$-bilinear form. 
Using this pairing and the isomorphism $H^1(E_K^\infty ,\partial E_K^\infty;\R)\xrightarrow{\cong}H^1( E_K^\infty;\R)$, Milnor defined a symmetric bilinear pairing called the \emph{quadratic form} of $K$ \[H^1(E_K^\infty;\R)\times H^1( E_K^\infty;\R)\to\R,\] given by $(x,y)\mapsto (t^*x)\cup y+(t^*y)\cup x$, where $t$ is the deck transformation corresponding to $1\in\Z\cong H_1(E_K;\Z)$. 
Since $\R[t,t^{-1}]$ is a PID and $H^1( E_K^\infty ;\R)$ is a finitely-generated torsion $\R[t,t^{-1}]$-module, $H^1(E_K^\infty ;\R)$ decomposes as \[H^1(E_K^\infty;\R)\cong\left(\bigoplus_{\lambda=\overline{\lambda}} H^1_\lambda( E_K^\infty;\R)\right)\oplus\left(\bigoplus_{\lambda\neq\overline{\lambda}} \big(H^1_\lambda(E_K^\infty;\R)\oplus H^1_{\overline{\lambda}}(E_K^\infty;\R)\big)\right),\] where the direct sum runs over all monic irreducible polynomials $\lambda(t)\in\R[t,t^{-1}]$, and where $H^1_\lambda(E_K^\infty ;\R)$ is the $\lambda(t)$-primary part of $H^1(E_K^\infty;\R)$.
Note that the direct sum is split between symmetric and non-symmetric polynomials.

The only contributions to the signature of the quadratic form of $K$ come from symmetric polynomials $\lambda(t)=\lambda_\theta(t)=t^2-2\cos(\theta)t+1$, $0<\theta<\pi$, whose roots are $e^{\pm\theta i}$. 
Milnor defined the signature $\sigma_\theta(K)$ to be the signature of the quadratic form of $K$ restricted to the $\lambda_\theta$-primary part of $H^1(E_K^\infty;\R)$.
He then proved that $\sigma_\theta:\mathcal{C}\twoheadrightarrow\Z$ is a homomorphism. 
In fact, $\sigma_\theta$ decends to a homomorphism on the algebraic concordance group, since it only depends on the cohomology of the infinite cyclic cover of $E_K$.
Milnor showed that the homomorphism $\oplus_{\theta\in S}\sigma_\theta:\mathcal{AC}\twoheadrightarrow\Z^\infty$ is a surjection.


\subsection{Levine signatures} \label{subsec:Levinesgn}

We now recall Levine's definition \cite{Levine}.
Let $\F$ be a field with $\text{char}(\F)\neq 2$.
An \emph{isometric structure} over $\F$ consists of a triple $(V,B,T)$, where $V$ is an even-dimensional $\F$-vector space, $B$ is a nonsingular, symmetric bilinear form on $V$, and $T$ is an isometry of $V$. 
An isometric structure $(V,B,T)$ is said to be \emph{null-cobordant} if $B$ has a $T$-invariant metabolizer, and two isometric structures $(V_1,B_1,T_1)$ and $(V_2,B_2,T_2)$ are \emph{cobordant} if the bilinear form $B_1\oplus -B_2$ on $V_1\oplus V_2$ has a metabolizer invariant under the isometry $T_1\oplus T_2$.
Let $\mathcal{G}_\F$ denote the group of cobordism classes of isometric structures $(V,B,T)$ over $\F$ satisfying $\Delta_T(1)\Delta_T(-1)\neq 0$, where $\Delta_T$ is the characteristic polynomial of $T$.
Using the decomposition of any $V$ into generalized eigenspaces, there is a decomposition $\mathcal{G}_\F\cong\bigoplus_{\lambda}\mathcal{G}^\lambda_\F$, where $\lambda=\lambda(t)$ is a symmetric, monic, irreducible polynomial over $\F$, and $\mathcal{G}^\lambda_\F$ is the subgroup of isometric structures $(V,B,T)$ with $\Delta_T$ some power of $\lambda$.

Levine defines an injective homomorphism $\mathcal{AC}\hookrightarrow\mathcal{G}_\Q$ which represents the class of a Seifert matrix by a $2g\times 2g$ nonsingular matrix $A$ and sends it to the isometric structure $(\Q^{2g},A+A^T,A^{-1}A^T)$.
For an irreducible, symmetric polynomial $\lambda$ over $\R$, Levine defines $\sigma_\lambda:\mathcal{AC}\to\Z$ as the composite \[\mathcal{AC}\hookrightarrow\mathcal{G}_\Q\to\mathcal{G}_\R\twoheadrightarrow\mathcal{G}^\lambda_\R\twoheadrightarrow\Z.\]
The final homomorphism takes the signature of the bilinear form $(V,B)$ associated to an isometric structure $(V,B,T)$, restricted to the $\lambda$-primary component. 
For more details, see \cite{Collins,LivingstonNaik}. 
Since $\lambda$ is a symmetric, irreducible polynomial in $\R[t,t^{-1}]$, we have $\lambda(t)=\lambda_\theta(t)=t^2-2\cos(\theta)t+1$ for some $0<\theta<\pi$. 

It is well-known that $\sigma_{\lambda_\theta}=\sigma_\theta$, that is, that the Levine and Milnor signatures agree \cite{Levine}.
Henceforth, we will use the terms ``Levine signatures'' and ``Milnor signatures'' interchangeably.
As $\cos(-\theta)=\cos(\theta)$, we will use the notation $\lambda_\theta$ and $\sigma_\theta$ to mean $\lambda_{-\theta}$ and $\sigma_{-\theta}$, respectively, when convenient.


\subsection{Proof of Theorem~\ref{thm:main}} \label{subsec:proof}

We now leverage Proposition~\ref{prop:quotient} to establish a surjection $\mathcal{AC}_R\twoheadrightarrow\Z^\infty$ for any fixed $R$.

\begin{proof}[Proof of Theorem~\ref{thm:main}]
Fix a knot $R\subset S^3$.
We first reduce the proof to a number-theoretic argument.
We will use a commutative square of surjections
\begin{center}
\begin{tikzcd}
\mathcal{AC} \arrow[r,twoheadrightarrow,"\oplus_{S'}\sigma_\theta"] \arrow[d,twoheadrightarrow] & \Z^\infty \arrow[d,twoheadrightarrow] \\
\mathcal{AC}_R \arrow[r,twoheadrightarrow] & \Z^\infty/\langle(\oplus_{S'}\sigma_\theta)([R_{n,1}]),(\oplus_{S'}\sigma_\theta)([(-R)_{n,1}])\rangle_{n\in\Z}
\end{tikzcd} 
\end{center}
\noindent Recall that $\mathcal{AC}_R\cong\mathcal{AC}/\langle[R_{n,1}],[(-R)_{n,1}]\rangle_{n\in\Z}$ by Proposition~\ref{prop:quotient}, so the square commutes.
The homomorphism $\oplus_{S'}\sigma_\theta$ is related to Milnor's surjection $\oplus_S\sigma_\theta:\mathcal{AC}\twoheadrightarrow\Z^\infty$. 
Define \[S'=\left\{0<\theta<\pi\,\bigg|\,\cos(\theta)=\frac{2p-1}{2p},\,p\,\text{ prime} \right\}\subset S,\]
and
\[S_{\cos}:=\cos(S')=\left\{
\frac{2p-1}{2p}:p\text{ prime}
\right\}.\]
Since $\cos$ is injective on $(0,\pi)$, passage from $S'$ to
$S_{\cos}$ loses no information.

Now recall a result of Seifert \cite{Seifert} which states that the Alexander polynomial $\Delta_{P(K)}(t)$ of a satellite knot $P(K)$ with pattern $P\subset S^1\times D^2$ and companion knot $K\subset S^3$ factors as $\Delta_{P(K)}(t)=\Delta_K(t^n)\cdot\Delta_{P(U)}(t)$, where $[P]=n\in\Z= H_1(S^1\times D^2)$ and $U$ is the unknot.
For $(n,1)$-cables, this becomes $\Delta_{K_{n,1}}(t)=\Delta_K(t^n)$, so that the roots of $\Delta_{K_{n,1}}(t)$ are precisely the $n$-th roots of the roots of $\Delta_K(t)$.
This implies that the Milnor signatures $\sigma_\theta$ of $R_{n,1}$ vanish for all $0<\theta<\pi$, except possibly at those $\theta$ such that $\lambda_{n\theta}(t)$ is a factor of $\Delta_R(t)$, and likewise for the Milnor signatures of $(-R)_{n,1}$.
Recall that $\Delta_{-R}(t)=\Delta_R(t)$, so the Milnor signatures of $(-R)_{n,1}$ do not contribute any additional values of $\theta$ we must consider.

Let $\Theta=\{0<\beta<\pi\,|\,\Delta_R(e^{\beta i})=0\}$.
As $\Theta$ is a finite set, the result will follow if we show that, for each $\beta\in\Theta$, the set \[S_\beta=\left\{\cos\left(\frac{\beta+2\pi m}{n}\right):m\in\mathbb Z,\ n\ge 1\right\}\] has finite intersection with the set $S_{\text{cos}}$.
The desired homomorphism $\mathcal{AC}_R\twoheadrightarrow\Z^\infty$ is then given by projecting $\Z^\infty/\langle(\oplus_{S'}\sigma_\theta)([R_{n,1}]),(\oplus_{S'}\sigma_\theta)([(-R)_{n,1}])\rangle_{n\in\Z}$ onto all factors of $\Z^\infty$ for which no $\sigma_\theta(R_{n,1})$, $\theta\in S'$, $n\in\Z$, is nonzero.
The following number-theoretic argument concludes the proof by showing that the intersection $S_{\text{cos}}\cap S_\beta$ is finite for every $\beta\in\R$. 

\begin{prop}\label{prop: finite}
For every real $\beta$, the set
\[ S_{\cos}\cap S_\beta \]
has at most one element.
\end{prop}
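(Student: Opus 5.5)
The plan is to translate the condition into one about algebraic numbers and compare their valuations at primes. Suppose, for contradiction, that $S_{\cos}\cap S_\beta$ contains two distinct elements. These are $\frac{2p-1}{2p}$ and $\frac{2q-1}{2q}$ for distinct primes $p\neq q$. So there are $m_1,m_2\in\Z$ and $n_1,n_2\geq 1$ such that, with $\alpha_j=(\beta+2\pi m_j)/n_j$,
\[
\cos\alpha_1=\frac{2p-1}{2p},\qquad \cos\alpha_2=\frac{2q-1}{2q}.
\]
Set $z_1=e^{i\alpha_1}$ and $z_2=e^{i\alpha_2}$. Then $n_j\alpha_j=\beta+2\pi m_j$, and therefore
\[
z_1^{\,n_1}=e^{i\beta}=z_2^{\,n_2}.
\]
This single multiplicative relation is the only way $\beta$ enters. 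In particular, no assumption on $\beta$ (algebraic or otherwise) is needed, since $e^{i\beta}$ is forced to be algebraic.

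Next, identify $z_1$ and $z_2$ algebraically. From $z_1+z_1^{-1}=2\cos\alpha_1=\frac{2p-1}{p}$, the number $z_1$ is a root of
\[
f_p(z)=pz^2-(2p-1)z+p.
\]
This polynomial has discriminant $1-4p<0$, so it is irreducible over $\Q$. It is also primitive, since $\gcd(p,2p-1)=1$. Its two roots are $z_1$ and $z_1^{-1}$. Work in the number field $L=\Q(z_1,z_2)$ and compute the valuations of $z_1$ at its primes.

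\begin{enumerate}
\item Let $\mathfrak{l}$ be a prime of $L$ not lying over $p$. Then $z_1$ satisfies the monic polynomial $z^2-\frac{2p-1}{p}z+1$, whose coefficients are $\mathfrak{l}$-integral. Hence $z_1$ and $z_1^{-1}$ are both $\mathfrak{l}$-integral, so $v_{\mathfrak{l}}(z_1)=0$.
\item Let $\mathfrak{P}$ be a prime over $p$. We have $v_{\mathfrak{P}}(z_1)+v_{\mathfrak{P}}(z_1^{-1})=0$ and $v_{\mathfrak{P}}\bigl(z_1+z_1^{-1}\bigr)=v_{\mathfrak{P}}\bigl(\tfrac{2p-1}{p}\bigr)<0$. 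So the two valuations cannot both be zero, and $v_{\mathfrak{P}}(z_1)\neq 0$.
\end{enumerate}

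By the same argument, $v_{\mathfrak{l}}(z_2)=0$ for every prime $\mathfrak{l}$ of $L$ not over $q$. Now take a prime $\mathfrak{P}$ of $L$ over $p$. Since $p\neq q$, $\mathfrak{P}$ does not lie over $q$, and applying $v_{\mathfrak{P}}$ to $z_1^{n_1}=z_2^{n_2}$ gives
\[
n_1\,v_{\mathfrak{P}}(z_1)=n_2\,v_{\mathfrak{P}}(z_2)=0.
\]
Because $n_1\geq 1$ and $v_{\mathfrak{P}}(z_1)\neq 0$, this is a contradiction. Hence $S_{\cos}\cap S_\beta$ has at most one element.

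The only real content is the valuation computation in the second item: $z_1$ has nonzero valuation exactly at the primes over $p$. This is where primality enters. If $p$ were composite, the denominator of $\frac{2p-1}{p}$ would be supported on several primes, and two different values could share one of them, so the argument would break down. With $p$ prime, the rest of the proof is bookkeeping.
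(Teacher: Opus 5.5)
Your proof is correct, and it takes a genuinely different route from the paper's.

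\textbf{The paper's route.} The paper never leaves $\Q$. It uses the Chebyshev identity to write $P_n\bigl(\tfrac{2p-1}{p}\bigr)=2\cos\beta$, where $P_n(Y)=2T_n(Y/2)$ is monic in $\Z[Y]$. The numerator of the left side is congruent to $(2p-1)^n$ modulo $p$, so the left side has denominator exactly $p^n$. Hence $2\cos\beta$ must be rational with denominator a power of $p$, which pins down $p$.

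\textbf{Your route.} You replace the additive Chebyshev relation by the multiplicative relation $z_1^{n_1}=e^{i\beta}=z_2^{n_2}$. You then compare valuations in the number field $\Q(z_1,z_2)$.

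\textbf{How they relate.} The two arguments detect the same phenomenon. Sharpen your item 2 slightly: if $v_{\mathfrak{P}}(z_1)=-k\neq 0$, then $v_{\mathfrak{P}}(z_1+z_1^{-1})=-|k|$. This gives $|v_{\mathfrak{P}}(z_1)|=e(\mathfrak{P}/p)$, and hence $v_{\mathfrak{P}}(z_1^{n}+z_1^{-n})=-n\,e(\mathfrak{P}/p)$. Together with your item 1, this is exactly the paper's statement that $2\cos\beta=z_1^{n}+z_1^{-n}$ has denominator $p^n$.

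\textbf{What each buys.} The paper's version is more elementary: it needs no primes of number fields and no integral closedness of valuation rings. It also gives a direct characterization rather than a contradiction: the intersection is empty unless $2\cos\beta\in\Q$, and otherwise $p$ and $n$ are read off from its denominator. Your version avoids the Chebyshev recursion and the integrality of $P_n$ entirely. It also yields a direct characterization if you drop the contradiction framing: $p$ is the unique rational prime lying below the primes at which the algebraic number $e^{i\beta}$ has nonzero valuation.
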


\begin{proof}[Proof of Proposition~\ref{prop: finite}]
For $p$ a prime, let
\[
q_p=\frac{2p-1}{2p},
\]
and suppose $q_p\in S_\beta$. Then for some $m\in\mathbb Z$ and $n\ge 1$,
\[
q_p=\cos\left(\frac{\beta+2\pi m}{n}\right).
\]
By the Chebyshev identity $T_n(\cos\varphi)=\cos(n\varphi)$,
\[
T_n(q_p)=\cos\beta.
\]
Now set
\[
y_p=2q_p=\frac{2p-1}{p},
\qquad
P_n(Y)=2T_n(Y/2).
\]
These rescaled Chebyshev polynomials $P_n$ (sometimes called the
Vieta-Lucas polynomials) satisfy 
\[
P_0(Y)=2,\qquad P_1(Y)=Y,\qquad P_{r+1}(Y)=Y P_r(Y)-P_{r-1}(Y),
\]
so $P_n\in\mathbb Z[Y]$ is monic of degree $n$ for every $n\ge 1$. Also
\[
P_n(y_p)=2T_n(q_p)=2\cos\beta. \tag{1}
\]

Define $y_p=a/p$, where $a=2p-1$. Since $p\nmid a$, this is in lowest terms. Since $P_n$ is monic,
\[
P_n(Y)=Y^n+b_{n-1}Y^{n-1}+\cdots+b_0,
\qquad b_i\in\mathbb Z.
\]
Therefore
\[
P_n(a/p)=
\frac{a^n+b_{n-1}a^{n-1}p+\cdots+b_0p^n}{p^n}.
\]
The numerator is congruent to $a^n$ modulo $p$, hence is not divisible by $p$. Thus $P_n(y_p)$, in lowest terms, has denominator exactly $p^n$.

Consequently, if $2\cos\beta$ is irrational, (1) is impossible. If
\[
2\cos\beta=\frac{A}{B}
\]
in lowest terms with $B>0$, then (1) implies $B=p^n$. A fixed positive integer $B$ is a positive power of at most one prime. Hence at most one prime $p$ can occur, and therefore $|S_{\cos}\cap S_\beta|\le 1$. 
\end{proof}

This concludes the proof of Theorem~\ref{thm:main}.
\end{proof}


\section{Torsion in relative concordance groups} \label{sec:torsion}

Recall that a knot $K\subset S^3$ is \emph{negative amphichiral} if $K$ is isotopic to $-K$.
By definition, any negative amphichiral knot which is not (topologically or smoothly) slice has order 2 in the (topological or smooth) concordance group.
Thus, one readily obtains examples of knots which have order 2 in $\mathcal{C}$.
In contrast, the following question remains open.

\begin{quest}[K3 Problem 1.38(a) \cite{K3}] \label{q:torsion}
Does the (topological or smooth) concordance group contain elements which are torsion of order greater than 2?
\end{quest}

In this section, we show that for any finite set $N$ of integers greater than 1, there exists $R\subset S^3$ such that for each $n\in N$, $\mathcal{C}_R$ has an element which is torsion of order $n$.

\begin{exmp} \label{ex:torsion}
Let $N=\{n_1,\dots,n_k\}$ be a set of positive integers greater than 1, and let $m=\text{lcm}(N)$.
Fix $R=\#^m(3_1)$, the connected sum of $m$ copies of the right-handed trefoil.
We will show that the relative concordance group $\mathcal{C}_R$ contains an element of order $n_i$ for each $n_i\in N$.
Let $d_i=m/n_i$, and define $K_i=\#^{d_i}(3_1)$.
Then $\#^{n_i}K_i=R$, so the order of $K_i$ in $\mathcal{C}_R$ divides $n_i$.
We show that the order of $K_i$ in $\mathcal{C}_R$ is exactly $n_i$.

Recall that the Alexander polynomial of the trefoil is $\Delta_{3_1}(t)=t^2-t+1=\lambda_{\pi/3}(t)$.
The associated Milnor signature $\sigma_{\pi/3}([3_1])$ has value $-1$, and $\sigma_{\pi/3}([-3_1])=1$. 
Note that this realizes a nontrivial invariant for Milnor's original collection of signatures $\{\sigma_\theta\}_{\theta\in S}$.
For any $k\in\N$, the Alexander polynomial of $\#^k(3_1)$ is $\Delta_{\#^k(3_1)}(t)=(\Delta_{3_1}(t))^k$, and $\sigma_{\pi/3}(\#^k(3_1))=-k$ by additivity.
As $\Delta_{\#^k(3_1)}(t)$ has the same roots as $\Delta_{3_1}(t)$, the signature $\sigma_{\pi/3}$ is the only nontrivial Milnor signature for $\#^k(3_1)$.
Analogous statements hold for $\#^k(-3_1)$.

Consider the surjective homomorphism \[\overline{\sigma}_{\pi/3}:\mathcal{AC}_R\twoheadrightarrow\Z/\langle\sigma_{\pi/3}([R_{n,1}]),\sigma_{\pi/3}([(-R)_{n,1}])\rangle_{n\in\Z}\] induced by the signature $\sigma_{\pi/3}:\mathcal{AC}\twoheadrightarrow\Z$.
Recall that $\sigma_{\pi/3}$ induces $\overline{\sigma}_{\pi/3}$ by Propositions~\ref{prop:halfdies} and~\ref{prop:quotient}.
To compute the signatures $\sigma_{\pi/3}([R_{n,1}])$ and $\sigma_{\pi/3}([(-R)_{n,1}])$, we recall a result of Kearton \cite{Kearton} which gives the precise relationship between the Milnor signatures of a knot and those of its satellites.
We restate this result in the special case of $(n,1)$-cables, where the nonzero signatures of $K_{n,1}$ take on the same values as those of $K$ (up to a sign), precisely at the corresponding $n$-th roots.

\begin{thm}[Kearton \cite{Kearton}] \label{thm:Kearton}
Let $K\subset S^3$, and let $0<\theta<\pi$ such that $e^{\theta i}$ is a root of $\Delta_{K_{n,1}}(t)$.
Then $\sigma_{\theta}(K_{n,1})=\pm\sigma_{n\theta}(K)$.
\end{thm}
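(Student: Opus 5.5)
Since the statement is Kearton's theorem on signatures of satellites, specialized to $(n,1)$-cables, I would prove it directly in Levine's framework of isometric structures. The signature $\sigma_\theta$ depends only on the Witt class over $\R$ of the isometric structure coming from a Seifert matrix, so the proof reduces to the cable's isometric structure.

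\textbf{Seifert matrix of the cable.} First I would write down a Seifert matrix for $K_{n,1}$. Take $n$ parallel copies of a Seifert surface $F$ for $K$ (with Seifert matrix $A$), joined by $n-1$ half-twisted bands; this is the banding already used in the proof of Proposition~\ref{prop:quotient}. The bands add no genus, since the pattern $(n,1)$-torus knot in the solid torus is unknotted in $S^3$. The result is a Seifert matrix for $K_{n,1}$ of block form $\widetilde A=(A_{ij})_{1\le i,j\le n}$, with $A_{ij}=A$ if $i\le j$ and $A_{ij}=A^T$ if $i>j$, up to the usual ordering and sign conventions for linking of parallel push-offs. One can equivalently write $\widetilde A=U\otimes A+L\otimes A^T$ with $U$ upper triangular (ones on and above the diagonal) and $L$ strictly lower triangular.

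\textbf{Diagonalizing over $\C$.} Next I would compute the Hermitian form $(1-\omega)\widetilde A+(1-\bar\omega)\widetilde A^T$ for $\omega=e^{\theta i}$; its signature jumps recover $\sigma_\theta$ via the standard relation between Tristram--Levine and Milnor signatures. The block structure is circulant up to a twist, so I would apply the discrete Fourier transform in the block index $\C^n\otimes H_1(F)$. I expect this to decompose the form, up to congruence, as an orthogonal sum over the $n$-th roots $\zeta$ of $\omega^n$. The summand indexed by $\zeta=\omega$ should be a nonzero real multiple of $(1-\omega^n)A+(1-\bar\omega^n)A^T$, and the other summands should be nonsingular near $\omega$. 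Equivalently, on the isometric-structure side, $(V,B,T)$ for $K_{n,1}$ should be Witt equivalent to the structure in which $T$ acts as an $n$-th root of $T_K$, restricted to the appropriate eigenspaces. This is consistent with Seifert's formula $\Delta_{K_{n,1}}(t)=\Delta_K(t^n)$ recalled above.

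\textbf{Conclusion.} Taking the jump of signature at $\omega$ then gives $\sigma_\theta(K_{n,1})=\pm$ the jump of the $K$-form at $\omega^n$, that is, $\pm\sigma_{n\theta}(K)$. The sign is the sign of the scalar factor, which also accounts for the case where $n\theta$ must be reduced into $(0,\pi)$ by replacing it with $\pm n\theta$ mod $2\pi$; in that case $\lambda_{n\theta}$ is identified with $\lambda_{-n\theta}$, as noted after Levine's definition.

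\textbf{Main obstacle.} The hardest step is the Fourier block decomposition: showing that the twisted-circulant matrix really splits congruently into pieces indexed by the roots $\zeta$, and tracking the scalar factors and their signs. If the direct computation is messy, I would fall back on Milnor's cohomological definition. The infinite cyclic cover of $E_{K_{n,1}}$ contains the $n$-fold pullback of $E_K^\infty$, and the $\lambda_\theta$-primary part of the cable's module identifies with the $\lambda_{n\theta}$-primary part of $K$'s module, with $t$ corresponding to $t^{1/n}$. I would then compare the quadratic forms $(t^*x)\cup y+(t^*y)\cup x$ under this identification, with the sign again coming from the relation between $t$ and $t^n$ on that eigenspace.
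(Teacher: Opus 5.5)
The paper gives no proof of this statement; it is quoted from Kearton, so your argument has to stand on its own. The outer layers are fine. Your block Seifert matrix $\widetilde A$ is correct, and the standard jump relation between Tristram--Levine and Milnor signatures does reduce the claim to comparing the form of $K_{n,1}$ at $\omega$ with the form of $K$ at $\omega^n$. The gap is that the step carrying all the content is only conjectured, and the mechanism you propose for it does not work.

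Write $\widetilde A=U\otimes A+L\otimes A^T$ as you do, and set $c=|1-\omega|^2$. Then
\[
M(\omega)=(1-\omega)\widetilde A+(1-\bar\omega)\widetilde A^T=P\otimes A+P^*\otimes A^T,
\]
where $P$ is the $n\times n$ upper triangular matrix with $1-\omega$ on the diagonal and $c$ in every entry above it.
\begin{itemize}
\item The off-diagonal blocks of $M(\omega)$ are $cA$ above the diagonal and $cA^T$ below it. This is not block circulant for any scalar twist, since that would force $A^T$ to be a multiple of $A$.
\item Consequently a discrete Fourier transform in the block index does not split it orthogonally. Already for $n=2$ the ordinary transform leaves the off-diagonal block $-\tfrac c2(A-A^T)\neq 0$.
\item The twisted-shift structure you have in mind lives in the monodromy, not in this matrix. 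For nonsingular $A$ one checks $\widetilde A^T=\Phi\widetilde A$ with $\Phi(y_1,\dots,y_n)=(y_2,\dots,y_n,A^TA^{-1}y_1)$. Its twist is a matrix, and exploiting it would still require computing the form on each eigenspace, which you have not done.
\item Your sketch also never checks that the scalar multiplying the surviving summand has constant sign near $\omega$. The jump comparison needs this.
\end{itemize}

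The missing idea is that $P+P^*=cJ$, where $J=\mathbf 1\mathbf 1^T$ has rank one. Put $v=P^{-1}\mathbf 1=(1-\omega)^{-1}(\bar\omega^{n-1},\dots,\bar\omega,1)^T$ and $W=\{w:\mathbf 1^Tw=0\}$.
\begin{itemize}
\item Since $Pv=\mathbf 1$ and $P^*v=c(\mathbf 1^Tv)\mathbf 1-\mathbf 1\in\C\mathbf 1$, the splitting $\C^n=\C v\oplus W$ is orthogonal for both $P$ and $P^*$.
\item This needs $\mathbf 1^Tv=(1-\bar\omega^n)/c\neq0$, i.e.\ $\omega^n\neq 1$. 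That holds near $e^{i\theta}$ because $\Delta_K(1)\neq 0$.
\item On $W$, the form $x^*Py$ equals $i$ times a nonsingular Hermitian form $H$.
\end{itemize}
Hence
\[
M(\omega)\cong c^{-1}\big[(1-\omega^n)A+(1-\bar\omega^n)A^T\big]\oplus\big(H\otimes i(A-A^T)\big),
\]
and the second summand is nonsingular of signature $0$ because $\det(A-A^T)=\pm1$. This gives $\sigma_\omega(K_{n,1})=\sigma_{\omega^n}(K)$ with equal nullities, which is Litherland's formula for this pattern. With it in place, the rest of your argument goes through.

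One correction: the scalar $c^{-1}$ is positive, so the sign does not come from it, contrary to your remark. The $\pm$ in the statement arises only because $\omega\mapsto\omega^n$ preserves the counterclockwise direction while $\sigma_{\bar\omega}=\sigma_\omega$. So the jump at $e^{in\theta}$ is plus or minus the jump at whichever of $e^{\pm in\theta}$ lies in the upper half-circle.
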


By Theorem~\ref{thm:Kearton}, the codomain of $\overline{\sigma}_{\pi/3}$ is $\Z/m\Z$.
As $\sigma_{\pi/3}$ induces $\overline{\sigma}_{\pi/3}$, $\overline{\sigma}_{\pi/3}([K_i])=-d_i\in\Z/m\Z$.
As $K_i$ has finite order in $\mathcal{C}_R$, hence finite order in $\mathcal{AC}_R$, $n_i=m/d_i$ divides the order of $K_i$ in $\mathcal{AC}_R$, so $n_i$ divides the order of $K_i$ in $\mathcal{C}_R$. 
Thus, $n_i$ equals the order of $K_i$ in $\mathcal{C}_R$.
Observe that we obtain elements of any order dividing $m$ in the same way.
This construction applies to other choices of $R=\#^mK$, where $K$ has a nontrivial Milnor signature.

\end{exmp}


\bibliographystyle{plain}
\bibliography{references}


\end{document}